\renewcommand*{\backref}[1]{}
\renewcommand*{\backrefalt}[4]{%
    \ifcase #1 (Not cited.)%
    \or        (p.\,#2)%
    \else      (pp.\,#2)%
    \fi}
\def\hMqx{\widehat M_q(x)}
\def\hSqx{\widehat S_q(x)}
\def\hDqx{\widehat D_q(x)}
\numberwithin{figure}{section}
\newcommand{\fq}{\ensuremath{\mathfrak{q}_\sharp}}
\title[M{\"o}bius function twisted by characters with powerful moduli]
      {Sums with the M{\"o}bius function twisted\\ by characters
       with powerful moduli}
\author[W.\ D.\ Banks]{William D.\ Banks}
\address{Department of Mathematics, 
         University of Missouri, 
         Columbia MO, USA.}
\email{bankswd@missouri.edu}
\author[I.\ E.\ Shparlinski]{Igor E.\ Shparlinski} 
\address{Department of Pure Mathematics,
		 University of New South Wales,
		 Sydney, NSW 2052, Australia.}
\email{igor.shparlinski@unsw.edu.au}
\date{\today}
\begin{document}

\begin{abstract}
In their recent work, the authors (2016) have combined classical ideas
of A.~G.~Postnikov (1956) and N.~M.~Korobov (1974) to derive improved bounds on
short character sums for certain nonprincipal characters with powerful
moduli. In the present paper, these results are used to bound sums
of the M{\"o}bius function twisted by characters of the same type,
complementing and improving some earlier work of B.~Green (2012).
To achieve this, we obtain a series of results about the size and 
zero-free region of $L$-functions with the same class of moduli.
\end{abstract}

\keywords{M{\"o}bius function, character sums, exponential sums}
\subjclass[2010]{Primary 11L40; Secondary 11L26, 11M06, 11M20}

\maketitle


\section{Introduction}
\label{sec:intro}

Our work in this paper is motivated, in part, by a
program of Sarnak~\cite{Sarn} to 
establish instances of a general pseudo-randomness principle related
to a famous conjecture of Chowla~\cite{Chowla}. Roughly speaking, the
principle asserts that the M\"obius function $\mu$ does not correlate with
any function $F$ of low complexity, so that
\begin{equation}
\label{eq:Sarnak}
\sum_{n\le x}\mu(n)F(n)=o\(\sum_{n\le x}|F(n)|\)\qquad(x\to\infty).
\end{equation}
Combining a result of Linial, Mansour and Nisan~\cite{LMN} with techniques of
Harman and K\'atai~\cite{HK}, Green~\cite{Green} has shown that
if $F:\{0,\ldots,N-1\}\to\{\pm 1\}$ has the property that
$F(n)$ can be computed from the binary digits of $n$ using a bounded depth circuit,
then $F$ is orthogonal to the M\"obius function $\mu$ in the sense that~\eqref{eq:Sarnak} holds; see~\cite[Theorem~1.1]{Green}.
Among other things, Green's proof~\cite{Green} relies on a bound
for a sum with the M{\"o}bius function twisted by a Dirichlet character $\chi$
of modulus $q=2^\gamma$. To formulate the result of~\cite{Green} we denote
\begin{equation}
\label{eq:M chi x defn} 
M(x,\chi)=\sum_{n\le x}\mu(n)\chi(n),
\end{equation}
where $\chi$ is a Dirichlet character modulo $q$;
we refer the reader to~\cite[Chapter~3]{IwKow} for the relevant
background on characters. We also denote
\begin{equation}
\label{eq:hatMqN-defn}
\hMqx=\max_{\chi\bmod q}|M(x,\chi)|,
\end{equation}
where the maximum is taken over all Dirichlet characters $\chi$ modulo $q$.
We remark that although the principal character $\chi_0$ is included in
the definition~\eqref{eq:hatMqN-defn}, the pure sum $M(x,\chi_0)$ 
plays no r\^ole as it satisfies a stronger bound than any bounds
currently known for $M(x,\chi)$,
$\chi\ne\chi_0$; see~\cite[Chapter~V, Section~5, Equation~(12)]{Wal}  (and also~\cite{Sound} for the best known 
bound under the Riemann Hypothesis).

According to~\cite[Theorem~4.1]{Green}, for some absolute constant $c_1>0$
and all moduli of the form
\begin{equation}
\label{eq:michelle}
q=2^\gamma\le e^{c_1\sqrt{\log x}},
\end{equation}
the bound
\begin{equation}
\label{eq:what-it-is}
\hMqx = O\(xe^{-c_1\sqrt{\log x}}\)
\end{equation}
holds, where the implied constant is absolute.

Our aim in the present paper is to improve this result 
in the three directions. Namely, we obtain a  new  bound which is
\begin{itemize}
\item  stronger than~\eqref{eq:what-it-is} and gives a better saving 
with a higher power of $\log x$ in the exponent;
\item  valid for a larger class of moduli $q$; 
\item nontrivial in a broader range of  the parameters  $q$ and $x$.
\end{itemize}
As immediate applications, our new bounds yield improvements of
some other results of Green~\cite{Green}. 

To achieve this aim, we derive a series of new results concerning the size and 
zero-free region of $L$-functions and their derivatives
with the same class of moduli, which we believe can be of
independent interest and have other applications. 

\section{Main  result}
\label{sec:results}

For any functions $f$ and $g$, the notations $f(x)\ll g(x)$, $g(x)\gg f(x)$ and
$f(x)=O(g(x))$ are used interchangeably to mean that
$|f(x)|\le c|g(x)|$ holds with some constant $c>0$.
Throughout the paper, we indicate explicitly
any parameters on which the implied constants may depend.

Given a natural number $q$, its \emph{core} (or kernel)
is the product $\fq$ over the prime divisors $p$ of $q$; that is,
$$
\fq=\prod_{p\mid q}p.
$$
In this paper, we are mainly interested in bounding the sums $M(x,\chi)$
for certain moduli $q$ that have a suitable
core $\fq$.  Specifically, we assume that
\begin{equation}
\label{eq:minmax}
\min\limits_{p\mid q}\{v_p(q)\}
\ge 0.7\gamma\qquad
\text{with}\quad\gamma=\max\limits_{p\mid q}\{v_p(q)\}\ge\gamma_0,
\end{equation}
where $\gamma_0>0$ is a sufficiently large absolute constant, and $v_p$ denotes 
the standard $p$-adic valuation (that is,  for $n \ne 0$ we have  $v_p(n)=\alpha$, where  
$\alpha$ is the largest integer such that $p^\alpha\mid n$). 
The technical condition~\eqref{eq:minmax} is needed in order to apply the results of
our earlier paper~\cite{BaSh}; it is likely that this constraint
can be modified and relaxed in various ways.

\begin{remark}
Our work in the present paper (and earlier in \cite{BaSh}) is motivated by
applications in the important special case that $q=p^\gamma$ is a prime power.
In this situation, the failure of the condition~\eqref{eq:minmax} simply means
that $q=O(1)$, and in this case there are usually results in the literature 
which are superior to ours. For example, we use this observation to 
derive Corollary~\ref{cor:Green-char-range} below.
\end{remark}

In addition to~\eqref{eq:M chi x defn} we consider sums of the form
\begin{equation}
\label{eq:psi tau chi x defn} 
\psi(x,\chi)=\sum_{n\le x}\Lambda(n)\chi(n),
\end{equation}
where $\Lambda$ is the von Mangoldt function;
such sums are classical objects of
study in analytic number theory (see, e.g.,~\cite[Section~5.9]{IwKow}
or~\cite[Section~11.3]{MontVau}).
We expect that the techniques of this paper can be applied to bound
other sums of number theoretic interest.

We treat the sums $M(x,\chi)$ and $\psi(x,\chi)$ in parallel.
Building on the results and techniques of~\cite{BaSh} we derive the
strongest bounds currently known for such sums when the
modulus $q$ of $\chi$ satisfies~\eqref{eq:minmax}.
In particular, we improve and  generalize Green's bound~\eqref{eq:what-it-is}.
Moreover, we obtain nontrivial bounds assuming only that
\begin{equation}
\label{eq:obama}
q\le\exp\(c_1(\log x)^{3/2}(\log\log x)^{-7/2}\)
\end{equation}
holds rather than~\eqref{eq:michelle} (that is, our results are nontrivial
for shorter sums).

\begin{theorem}
\label{thm:mu-Lambda-sums}
Let $q$ be a modulus satisfying~\eqref{eq:minmax}.
There is a constant $c>0$ that depends only on $\fq$ 
and has the following property.  Let
\begin{align*}
Q_1&=\exp\((\log q)^{7/3}(\log\log q)^{5/3}\),\\
Q_2&=\exp\((\log q)^7(\log\log q)^{-1}\),
\end{align*}
and for $j=1,2$ put
$$
E_j=\begin{cases}
\exp\(-c \log x \cdot (\log q)^{-2/3}(\log\log q)^{-4/3}\)\cdot(\log x)^j
&\quad\hbox{if $x\le Q_1$},\\
\exp\(-c(\log x\cdot\log q)^{1/2}(\log\log q)^{-1/2}\)
&\quad\hbox{if $Q_1<x\le Q_2$},\\
\exp\(-c(\log x)^{4/7}(\log\log x)^{-3/7}\)
&\quad\hbox{if $x>Q_2$}.
\end{cases}
$$
For every primitive character $\chi$ modulo $q$ we have
$$
\frac1xM(x,\chi)\ll E_1
\mand
\frac1x\psi(x,\chi)\ll E_2.
$$
\end{theorem}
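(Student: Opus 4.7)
The plan is to follow the standard analytic route: express both sums as contour integrals via a truncated Perron formula, shift the contour into a zero-free region of $L(s,\chi)$, and estimate the integrand there. The entire argument is driven by the short-character-sum bounds of \cite{BaSh}, which, through partial summation, translate into good control of $L(s,\chi)$ and its logarithmic derivative just to the left of the line $\Re s=1$.

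The first and technically heaviest half of the work is purely function-theoretic. Given a primitive $\chi$ modulo $q$ satisfying \eqref{eq:minmax}, the first step is to use partial summation on the bounds of \cite{BaSh} for $\sum_{n\le N}\chi(n)$ to produce uniform estimates for $\log L(s,\chi)$ in a rectangle of the form $\Re s\ge 1-\eta$, $|\Im s|\le T$, with $\eta=\eta(q,T)$ dictated by the shape of the character sum bounds. This yields simultaneous upper bounds for $|L(s,\chi)|$ and $|L'(s,\chi)|$ together with a lower bound for $|L(s,\chi)|$. From the lower bound one extracts a zero-free region of Korobov--Postnikov flavor, which for large $|\Im s|$ merges with the classical Vinogradov--Korobov zero-free region; the resulting composite region has three distinct regimes, corresponding exactly to the three ranges $x\le Q_1$, $Q_1<x\le Q_2$, $x>Q_2$. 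A Hadamard--Borel--Carath\'eodory argument then upgrades the $L$-bounds into effective bounds for $|L'(s,\chi)/L(s,\chi)|$ and for $|1/L(s,\chi)|$ on any contour sitting slightly inside this region.

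With these analytic estimates in hand, the remainder is a standard contour shift. One starts from
$$
\psi(x,\chi)=-\frac{1}{2\pi i}\int_{\kappa-iT}^{\kappa+iT}\frac{L'(s,\chi)}{L(s,\chi)}\,\frac{x^s}{s}\,ds+O(\mathrm{tail}),
$$
and the analogous identity for $M(x,\chi)$ obtained from the Dirichlet series $\sum\mu(n)\chi(n)n^{-s}=1/L(s,\chi)$. Shifting the contour to the vertical line $\Re s=1-\eta$, no new poles are encountered (for $M$, any contribution from $s=0$ is handled in the standard way), and one is left with three contour pieces, each controlled by $x\cdot\exp(-\eta\log x)$ times a polynomial factor in $\log x$ and $\log T$. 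Optimizing $\eta$ and $T$ against $x$ and $q$ according to the three shapes of the zero-free region then recovers $E_1$ for $M$ and $E_2$ for $\psi$; the factors $(\log x)^j$ in the first regime track the polynomial losses from the contour estimate, being $j=1$ for $M$ (through $1/L$) and $j=2$ for $\psi$ (through $L'/L$).

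The main obstacle is the first step: converting the short-sum bounds of \cite{BaSh} into sufficiently strong and \emph{uniform} pointwise estimates for $L$, $L'$, $1/L$, and $L'/L$ on a wide enough region, so that the three regimes built into $E_1$ and $E_2$ actually emerge from a single optimization. Keeping the dependence on the core $\fq$ packaged in the single constant $c$, rather than letting it creep into the exponents in $x$, requires careful quantitative tracking through the Perron step that feeds the Hadamard--Borel--Carath\'eodory lemma. Once the $L$-function inputs are correctly established, the contour-shift step is essentially routine.
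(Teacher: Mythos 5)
Your proposal follows essentially the same route as the paper: extend the short-sum bounds of \cite{BaSh}, convert them into $L$-function bounds and a three-regime zero-free region, apply a Borel--Carath\'eodory/Jensen device (the paper's Lemma~\ref{lem:Brady1}, an extension of \cite[Lemma~6.3]{MontVau}) to control $L'/L$ and $1/L$, and finish with a truncated Perron formula, a contour shift into the zero-free region, and a case-by-case optimization of $T$. Two minor imprecisions worth flagging: the key input is not a bound on the plain character sum $\sum_{n\le N}\chi(n)$ followed by partial summation in $t$ (that would lose too much for large $|t|$), but rather a direct bound on the twisted Dirichlet polynomial $\sum_{M<n\le M+N}\chi(n)n^{it}$ obtained via the Postnikov--Korobov polynomial approximation of $n^{it}$, which \cite{BaSh} provides and which the paper further extends to short intervals in Theorem~\ref{thm:T_chi(M,N;t)-bound}; and the zero-free region is not read off from a lower bound for $|L(s,\chi)|$ but is deduced from the \emph{upper} bound via a Landau-type argument (Lemma~\ref{lem:technicallemma}), after which the Borel--Carath\'eodory step produces the lower bound for $|L(s,\chi)|$ as a byproduct.
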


We remark that 
Green's bound~\eqref{eq:what-it-is} applies to arbitrary
characters whereas Theorem~\ref{thm:mu-Lambda-sums}
is formulated only for primitive characters. However, in the
special case that $q=2^\gamma$, the conductor $q_0$ of an arbitrary
Dirichlet character $\chi$ modulo $q$ is a power of two (since $q_0\mid q$),
and $\chi$ is a primitive character modulo $q_0$.  When
$q_0\ge 2^{\gamma_0}$ we still apply Theorem~\ref{thm:mu-Lambda-sums}
with $q_0$ in place of $q$, and this only increases
the range~\eqref{eq:obama} in which we have a nontrivial bound. 

We now give a simpler formulation of Theorem~\ref{thm:mu-Lambda-sums} in the regime 
\begin{equation}
\label{eq:alpha}
q  = \exp\((\log x)^{\alpha + o(1)}\),
\end{equation}
where $\alpha>0$ is a fixed real parameter.

\begin{corollary}
\label{cor:alpha}
Let $\alpha>0$ be fixed,
let $q$ be a modulus satisfying~\eqref{eq:minmax},
and suppose that~\eqref{eq:alpha} holds.
For every primitive character $\chi$ modulo $q$ we have
$$
\max\{|M(x,\chi)|,|\psi(x,\chi)|\}\le x\exp\(-(\log x)^{\beta(\alpha)+o(1)}\),
$$
where 
$$
\beta(\alpha)=\begin{cases}
4/7 
&\quad\hbox{if $\alpha\le 1/7$},\\
(1+\alpha)/2
&\quad\hbox{if $1/7\le\alpha\le 3/7$},\\
1-2\alpha/3
&\quad\hbox{if $\alpha\ge 3/7$}.
\end{cases}
$$
\end{corollary}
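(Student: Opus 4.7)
The plan is to derive Corollary~\ref{cor:alpha} directly from Theorem~\ref{thm:mu-Lambda-sums} by substituting the parametrization~\eqref{eq:alpha} into the piecewise bound for $E_j$ and extracting the dominant power of $\log x$ in each case. The three branches of $\beta(\alpha)$ correspond precisely to the three regimes $x>Q_2$, $Q_1<x\le Q_2$, $x\le Q_1$ of the theorem, so the proof is an organized case analysis built from routine asymptotic simplifications. Note that $E_1$ and $E_2$ share the same exponential factor and differ only by a polynomial-in-$\log x$ prefactor, so it suffices to track the exponent once; the bound on the maximum in the corollary then follows from the two estimates of the theorem.

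The first step is to identify which regime applies for a given $\alpha>0$. Under~\eqref{eq:alpha}, $\log q=(\log x)^{\alpha+o(1)}$ and $\log\log q=(1+o(1))\alpha\log\log x=(\log x)^{o(1)}$, so any fixed power of $\log\log q$ contributes only a $(\log x)^{o(1)}$ factor. Consequently
$$
\log Q_1=(\log q)^{7/3}(\log\log q)^{5/3}=(\log x)^{7\alpha/3+o(1)},\qquad
\log Q_2=(\log q)^{7}(\log\log q)^{-1}=(\log x)^{7\alpha+o(1)}.
$$
Comparing with $\log x$ one sees that $x>Q_2$ is active exactly when $\alpha<1/7$, $x\le Q_1$ is active exactly when $\alpha>3/7$, and the middle regime covers $1/7<\alpha<3/7$. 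At the boundary values $\alpha\in\{1/7,3/7\}$ either adjacent case can hold, but the resulting exponents agree ($4/7$ at $\alpha=1/7$ and $5/7$ at $\alpha=3/7$), so the $o(1)$ in the conclusion absorbs the ambiguity.

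The second step is, in each of the three cases, to substitute $\log q=(\log x)^{\alpha+o(1)}$ and $\log\log q=(\log x)^{o(1)}$ into the expression for $E_j$ from Theorem~\ref{thm:mu-Lambda-sums} and identify the leading exponent. For $\alpha\le 1/7$, the third case of $E_j$ immediately gives $\exp\bigl(-c(\log x)^{4/7}(\log\log x)^{-3/7}\bigr)$, hence $\beta(\alpha)=4/7$. For $1/7\le\alpha\le 3/7$, the middle case contributes
$$
(\log x\cdot\log q)^{1/2}(\log\log q)^{-1/2}=(\log x)^{(1+\alpha)/2+o(1)},
$$
so $\beta(\alpha)=(1+\alpha)/2$. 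For $\alpha\ge 3/7$, the first case contributes
$$
\log x\cdot(\log q)^{-2/3}(\log\log q)^{-4/3}=(\log x)^{1-2\alpha/3+o(1)},
$$
so $\beta(\alpha)=1-2\alpha/3$; the multiplicative prefactor $(\log x)^{j}$ in this case equals $\exp(O(\log\log x))=\exp((\log x)^{o(1)})$ and is harmlessly absorbed into the $o(1)$ in the exponent.

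No step poses a serious obstacle, as the whole argument is bookkeeping of asymptotics. The one point requiring care is verifying that every $\log\log$-contribution (from the $(\log\log q)^{\pm k}$ factors and from the polynomial prefactor $(\log x)^j$ in the first regime) contributes only a $(\log x)^{o(1)}$ term to the exponent, which is straightforward since each is at most $\exp(O(\log\log x))$. The hypothesis~\eqref{eq:minmax} on $q$ enters only through the invocation of Theorem~\ref{thm:mu-Lambda-sums}, and no further input is needed.
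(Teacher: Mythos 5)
Your proposal is correct and is exactly the routine substitution argument the paper has in mind; the paper states Corollary~\ref{cor:alpha} as an immediate reformulation of Theorem~\ref{thm:mu-Lambda-sums} without a separate proof, and your case analysis (matching the three ranges $x\le Q_1$, $Q_1<x\le Q_2$, $x>Q_2$ to $\alpha\ge 3/7$, $1/7\le\alpha\le 3/7$, $\alpha\le 1/7$, with boundary exponents agreeing at $4/7$ and $5/7$) is precisely the intended bookkeeping.
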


Alternatively, we can define the function $\beta(\alpha)$ of Corollary~\ref{cor:alpha} by
$$
\beta(\alpha)=\min\left\{\max\left\{4/7,(1+\alpha)/2\right\},1-2\alpha/3\right\};
$$
see also Figure~\ref{fig:beta_alpha} for its behavior.
\begin{figure}[H]
  \centering
  \includegraphics[scale=0.36]{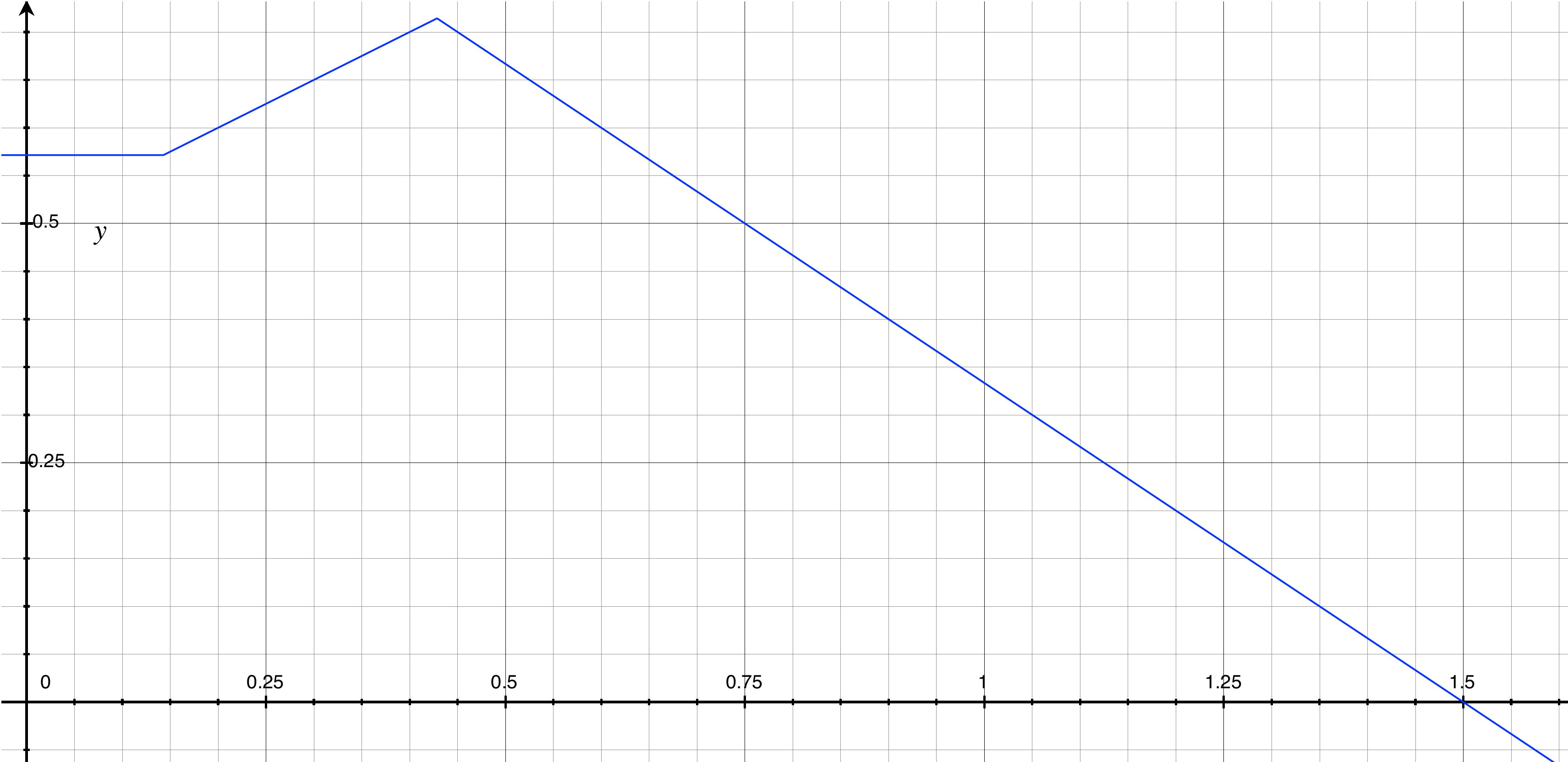}
  \caption{Function $\beta(\alpha)$}
  \label{fig:beta_alpha}
\end{figure}

Our proof of Theorem~\ref{thm:mu-Lambda-sums} is based on new bounds on 
Dirichlet polynomials and on the magnitudes and zero-free regions of certain
$L$-functions, which extend and improve those of Gallagher~\cite{Gal}, 
Iwaniec~\cite{Iwan} and the authors~\cite{BaSh}; see Section~\ref{sec:outline}
for more details. We believe these bounds are of independent interest
and may have many other arithmetic applications. In particular, 
one can apply these results to estimate character sums and exponential sums
twisted by the Liouville function, the divisor function, and other functions
of number theoretic interest.
 
\section{Applications}
\label{sec:appl}

It is easy to see that Theorem~\ref{thm:mu-Lambda-sums}
improves Green's result~\cite[Theorem~4.1]{Green}
in terms of both the range, increasing the right hand side in~\eqref{eq:michelle},
and the strength of the bound, reducing the right hand side in~\eqref{eq:what-it-is}. 
Although one can derive a general result with a trade-off between the range 
and the strength of the bound, we present only two special cases:
\begin{itemize}
\item[$(i)$] Improving the range~\eqref{eq:michelle} as much as possible
while preserving~\eqref{eq:what-it-is};
\item[$(ii)$] Improving the bound~\eqref{eq:what-it-is} as much as possible
(in this case, we are able to improve the range as well).
\end{itemize}
More precisely, for $(i)$ we note that the first bound of Theorem~\ref{thm:mu-Lambda-sums} 
implies that
\begin{equation}
\label{eq:thebigsick}
\log\(x^{-1}|M(x,\chi)|\)\ll\sqrt{\log x}
\end{equation}
holds for a character $\chi$ modulo $q=2^\gamma$ provided that
$\log q\ll(\log x)^{3/4}(\log\log x)^{-2}$ and the conductor of $\chi$
satisfies $q_0\ge 2^{\gamma_0}$ (so Theorem~\ref{thm:mu-Lambda-sums} applies).
Since the remaining characters $\chi$ modulo $q$ that have $q_0<2^{\gamma_0}$
comprise a \emph{finite} set, the bound~\eqref{eq:thebigsick} can be
achieved for those characters on a one-to-one basis; see, e.g., the paper
of Hinz~\cite{Hinz} and references therein.  Using~\eqref{eq:thebigsick}
to bound $\hMqx$ we derive the following statement.

\begin{corollary}
\label{cor:Green-char-range}
For some absolute constant $c_1>0$ and all moduli of the form
$$
q=2^\gamma\le e^{c_1(\log x)^{3/4} (\log\log x)^{-2}}\qquad(\gamma\in\NN)
$$
the bound~\eqref{eq:what-it-is} holds.
\end{corollary}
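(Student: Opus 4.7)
My plan is to deduce the corollary from Theorem~\ref{thm:mu-Lambda-sums} via a straightforward reduction to primitive characters, supplemented by known bounds for the finitely many small conductors that fall outside the scope of that theorem. Any character $\chi$ modulo $q=2^\gamma$ is induced by a primitive character $\chi^*$ of conductor $q_0=2^{\gamma'}$ with $\gamma'\le\gamma$. When $\gamma'\ge 1$, both $\chi$ and $\chi^*$ vanish on even integers, so $M(x,\chi)=M(x,\chi^*)$; when $\gamma'=0$, the character $\chi$ is principal and $M(x,\chi)$ reduces to the sum of $\mu(n)$ over odd $n\le x$, which is $O\bigl(x\exp(-c\sqrt{\log x})\bigr)$ by the classical Mertens-type estimates mentioned just after~\eqref{eq:hatMqN-defn}.

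For a primitive $\chi^*$ of conductor $q_0\ge 2^{\gamma_0}$, condition~\eqref{eq:minmax} holds trivially (only $p=2$ is involved), so Theorem~\ref{thm:mu-Lambda-sums} applies with $q$ replaced by $q_0$, and its constant $c$ depends only on the core $\fq=2$ and hence is absolute. I would then check regime-by-regime that, under the hypothesis $\log q_0\le\log q\le c_1(\log x)^{3/4}(\log\log x)^{-2}$, the factor $E_1$ is dominated by $\exp(-c'\sqrt{\log x})$. In the first regime ($x\le Q_1$), the bound $(\log q_0)^{2/3}\le c_1^{2/3}(\log x)^{1/2}(\log\log x)^{-4/3}$ gives $\log x\cdot(\log q_0)^{-2/3}(\log\log q_0)^{-4/3}\gg c_1^{-2/3}(\log x)^{1/2}$, so the polynomial prefactor $\log x$ is harmlessly absorbed. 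In the middle regime ($Q_1<x\le Q_2$), one has $\sqrt{\log x\log q_0}\,(\log\log q_0)^{-1/2}\ge\sqrt{\log x}\cdot\sqrt{\log q_0/\log\log q_0}$, and the last factor exceeds an absolute positive constant once $\gamma_0$ is large enough. In the last regime ($x>Q_2$), the savings $(\log x)^{4/7}(\log\log x)^{-3/7}$ already dominate $\sqrt{\log x}$.

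Finally, the primitive characters of conductor $2\le q_0<2^{\gamma_0}$ form a \emph{finite} family; for each of them the bound~\eqref{eq:thebigsick} follows from individual classical results such as those of Hinz~\cite{Hinz} and the references therein. Combining the three cases and taking the maximum over all characters $\chi$ modulo $q$ yields $\hMqx\ll x\exp(-c'\sqrt{\log x})$, which is~\eqref{eq:what-it-is} after rechoosing $c_1$. The only thing requiring care is the bookkeeping: one must fix $c_1$ small enough (and $\gamma_0$ large enough) so that all three regime estimates, together with the finite-conductor contribution, combine into a single absolute constant in the final bound.
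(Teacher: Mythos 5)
Your proposal is correct and follows essentially the same route the paper itself sketches just before stating the corollary: reduce to the primitive character of conductor $q_0\mid 2^\gamma$, apply Theorem~\ref{thm:mu-Lambda-sums} when $q_0\ge 2^{\gamma_0}$ (noting that \eqref{eq:minmax} is automatic for a prime-power modulus and the constant depends only on $\fq=2$), handle the finitely many small-conductor cases and the principal character by classical results as in Hinz~\cite{Hinz}, and verify via the three-regime case analysis that the hypothesis $\log q\ll(\log x)^{3/4}(\log\log x)^{-2}$ makes $E_1\ll\exp(-c'\sqrt{\log x})$. Your regime-by-regime bookkeeping matches the computation implicit in the paper's remark that $3/4$ is the maximal root of $\beta(\alpha)=1/2$.
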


Similarly, for $(ii)$ we take $q$ smaller than in Corollary~\ref{cor:Green-char-range}
so the last two bounds in definition of $E_1$ in Theorem~\ref{thm:mu-Lambda-sums} 
dominate.  We obtain the following statement.

\begin{corollary}
\label{cor:Green-char-bound}
For some absolute constant $c_1>0$ and all moduli of the form
$$
q=2^\gamma\le e^{c_1(\log x)^{9/14}(\log\log x)^{-19/14}}\qquad(\gamma\in\NN)
$$
the bound 
$$
\hMqx\ll xe^{-c_1(\log x)^{4/7}(\log\log x)^{-3/7}}
$$
holds.
\end{corollary}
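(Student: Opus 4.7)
The plan is to deduce the corollary from Theorem~\ref{thm:mu-Lambda-sums} by tracking how its piecewise bound on $M(x,\chi)/x$ specializes under the chosen upper bound on $q$. Following the reduction used for Corollary~\ref{cor:Green-char-range}, I first dispose of the principal character via the unconditional estimate~\cite[Chapter~V, Section~5]{Wal}, which is far stronger than the target. Any nonprincipal character $\chi$ modulo $q=2^\gamma$ is induced from a primitive character of conductor $q_0\mid q$, and $q_0$ is itself a power of $2$; the finitely many characters with $q_0<2^{\gamma_0}$ can be handled individually by classical bounds (e.g.\ Hinz~\cite{Hinz}). For the remaining characters, Theorem~\ref{thm:mu-Lambda-sums} applies with $q_0$ in place of $q$, and since $q_0\le q$ the hypothesis of the corollary forces the analogous hypothesis on $q_0$. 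It therefore suffices to verify that, for every $q$ satisfying~\eqref{eq:minmax} together with $\log q\le c_1(\log x)^{9/14}(\log\log x)^{-19/14}$, the quantity $E_1$ of Theorem~\ref{thm:mu-Lambda-sums} is bounded by $\exp\(-c'(\log x)^{4/7}(\log\log x)^{-3/7}\)$ for some $c'>0$.

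This check splits into the three subcases defining $E_1$. If $x>Q_2$, the third branch of $E_1$ already matches the target. If $Q_1<x\le Q_2$, the inequality $\log x\le(\log q)^7/\log\log q$ furnishes the lower bound $\log q\ge(\log x\cdot\log\log q)^{1/7}$; feeding this into the second branch of $E_1$ gives
$$
(\log x\cdot\log q)^{1/2}(\log\log q)^{-1/2}\ge(\log x)^{4/7}(\log\log q)^{-3/7}\ge(\log x)^{4/7}(\log\log x)^{-3/7},
$$
which yields the desired exponential decay in the middle regime.

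The remaining case $x\le Q_1$ is where the specific exponents $9/14$ and $-19/14$ of the hypothesis come into play. The first branch of $E_1$ is an increasing function of $\log q$, so the worst case is at the upper endpoint $\log q=c_1(\log x)^{9/14}(\log\log x)^{-19/14}$; substituting this value (and noting that $\log\log q\asymp\log\log x$) one computes
$$
\log x\cdot(\log q)^{-2/3}(\log\log q)^{-4/3}\gg(\log x)^{4/7}(\log\log x)^{19/21-4/3}=(\log x)^{4/7}(\log\log x)^{-3/7},
$$
with the exponents of $\log\log x$ cancelling to precisely the target value $-3/7$. The polynomial prefactor $\log x$ in this branch is absorbed into the exponential at the cost of a harmless reduction in the constant. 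The only real ``obstacle'' is precisely this bookkeeping: the exponents $9/14$ and $-19/14$ are chosen so that the first-branch analysis produces the target $4/7$ and $-3/7$, and once that calculation is verified the other two branches come along for free. Combining the three subcases and taking $c_1$ small enough for the implied constants to be compatible yields the corollary.
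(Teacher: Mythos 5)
Your proof is correct and follows essentially the same approach as the paper: reduce to primitive characters of conductor $q_0\mid q$ (handling the principal character and the finitely many small conductors separately) and then verify that each of the three branches of $E_1$ in Theorem~\ref{thm:mu-Lambda-sums} is dominated by $\exp(-c'(\log x)^{4/7}(\log\log x)^{-3/7})$ under the hypothesis $\log q\le c_1(\log x)^{9/14}(\log\log x)^{-19/14}$. Your version is actually more careful than the paper's one-line justification (the paper simply invokes the interpretation via the root of $\beta(\alpha)=4/7$, without writing out the first-branch computation or explaining the $(\log\log x)^{-19/14}$ factor, which your Case~3 calculation does explicitly).
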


Notice that the exponents $3/4$ and $9/14$ that occur
in Corollaries~\ref{cor:Green-char-range} and~\ref{cor:Green-char-bound}
are the maximal roots $\alpha$ of the equations $\beta(\alpha)=1/2$
and $\beta(\alpha)=4/7$, respectively (in Figure~\ref{fig:beta_alpha}
these roots occur at the rightmost intersection points between the
graph of $\beta(\alpha)$ and the horizontal lines at height $1/2$ and $4/7$,
respectively).

We also remark that Corollaries~\ref{cor:Green-char-range}
and~\ref{cor:Green-char-bound} hold with $2^\gamma$ replaced by
$p^\gamma$ for any fixed prime $p$; in this case,
the constant $c_1>0$ can depend on $p$ but is otherwise absolute.

Next, we turn our attention to the following
exponential sums twisted by the M{\"o}bius function:
$$
S_q(x,a)=\sum_{n\le x}\mu(n)\exp(2 \pi i a n/q)\qquad(a\in\ZZ).
$$
We denote
$$
\hSqx=\max_{a \in \ZZ} \left|S_q(x,a)\right|.
$$ 
Green has shown (see~\cite[Corollary~4.2]{Green}) that
for some absolute constant $c_2>0$ and all moduli of the form
$$
q=2^\gamma\le e^{c_2\sqrt{\log x}},
$$
the bound
$$
\hSqx\ll xe^{-c_2\sqrt{\log x}}
$$
holds. The proof is obtained by relating $\hSqx$ to $\hMqx$
and then exploiting the bound~\eqref{eq:what-it-is}.
In order to get a nontrivial bound on $\hSqx$ in this way, the savings 
in the bound on  $\hMqx$ has to be a little larger than $q$. For this
reason, even though Corollaries~\ref{cor:Green-char-range}
and~\ref{cor:Green-char-bound} are suitable for such applications
(and lead to an improvement of~\cite[Corollary~4.2]{Green} in both the
range and strength of the bound), we need to slightly reduce the range of $q$. 
Furthermore, a full analogue of these results also holds for sums 
with the M{\"o}bius function in arithmetic progressions; that is,
we can bound the quantity 
$$
\hDqx=\max_{\substack{a \in \ZZ\\ \gcd(a,q)=1}} \left|D_q(x,a)\right|, 
$$ 
where 
$$
D_q(x,a)=\sum_{\substack{n\le x\\ n \equiv a \bmod q}}\mu(n). 
$$

\begin{corollary}
\label{cor:Green-exp-progr}
For some absolute constant $c_2>0$ and all moduli of the form
$$
q=2^\gamma \le e^{c_2 (\log x)^{3/5} (\log\log x)^{-4/5}}\qquad(\gamma\in\NN)
$$
we have
$$
\max\{\hSqx,\hDqx\}\ll  x  e^{-c_2 (\log x)^{4/7} (\log\log x)^{-3/7}}.
$$
\end{corollary}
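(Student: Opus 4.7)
The plan is to reduce $\hSqx$ and $\hDqx$ to maxima of $|M(x,\chi^*)|$ over primitive characters $\chi^*$ with conductor $q_0 \mid q$, and then invoke Theorem~\ref{thm:mu-Lambda-sums}.

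First, for $\hDqx$ I would use orthogonality of Dirichlet characters. Writing
$$
D_q(x,a)=\frac{1}{\varphi(q)}\sum_{\chi\bmod q}\overline{\chi}(a)\,M(x,\chi)
$$
(valid for $\gcd(a,q)=1$; the coprimality condition $\gcd(n,q)=1$ can be dropped inside the sum because $\chi(n)=0$ otherwise) and grouping characters by conductor $q_0=2^j\mid q$, I observe that $M(x,\chi)=M(x,\chi^*)$ whenever $j\ge 2$, since both $\chi$ and its primitive inducer $\chi^*$ vanish on even integers. This gives $|D_q(x,a)|\le\max_{q_0\mid q}\max_{\chi^*}|M(x,\chi^*)|$.

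For $\hSqx$ I would use a Gauss-sum expansion. Since $\mu(n)\ne 0$ forces $n$ squarefree, the only $n\le x$ with $\gcd(n,2^\gamma)>1$ contributing to $S_q(x,a)$ satisfy $n\equiv 2\pmod 4$, and the substitution $n=2m$ reduces their contribution to a sum of the same shape with modulus $q/2$. Without loss of generality $\gcd(a,q)=1$ (otherwise the problem reduces to a smaller modulus). For $\gcd(n,q)=1$ the identity
$$
e(an/q)=\frac{1}{\varphi(q)}\sum_{\chi\bmod q}\chi(an)\,\tau(\overline{\chi})
$$
holds, and for the prime-power modulus $q=2^\gamma$ the Gauss sum $\tau(\chi)$ vanishes unless $\chi$ is primitive modulo $q$. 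Hence only the $\varphi(q)/2$ primitive characters contribute, each with $|\tau(\chi)|=\sqrt q$, yielding
$$
|S_q(x,a)|\ll\sqrt q\cdot\max_{q_0\in\{q,\,q/2\}}\max_{\chi^*\text{ prim}\bmod q_0}|M(x,\chi^*)|.
$$

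Now I would apply Theorem~\ref{thm:mu-Lambda-sums} to each primitive $\chi^*$ modulo $q_0\ge 2^{\gamma_0}$. In the range $\log q\le c_2(\log x)^{3/5}(\log\log x)^{-4/5}$ one checks that $x\le Q_1$ for the relevant conductors, placing us in the first regime of Theorem~\ref{thm:mu-Lambda-sums} and yielding
$$
|M(x,\chi^*)|\ll x\exp\bigl(-c(\log x)^{3/5}(\log\log x)^{-4/5}\bigr).
$$
The finitely many characters of conductor below $2^{\gamma_0}$ (including the principal one) are handled via the classical Korobov--Vinogradov bound for $L$-functions with a fixed character, which is stronger than needed. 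For $\hDqx$ this already gives a bound stronger than the target. For $\hSqx$ the $\sqrt q$ loss contributes the factor $\exp\bigl(\tfrac12 c_2(\log x)^{3/5}(\log\log x)^{-4/5}\bigr)$, which is absorbed by choosing $c_2<2c$; the resulting bound $x\exp(-c'(\log x)^{3/5}(\log\log x)^{-4/5})$ then dominates the stated $x\exp(-c_2(\log x)^{4/7}(\log\log x)^{-3/7})$ since $3/5>4/7$.

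The main obstacle is the quantitative balance in the Gauss-sum step for $\hSqx$. The loss factor is $\sqrt q$ (rather than $q$) precisely because only primitive characters have nonzero Gauss sum at prime-power modulus; and the threshold $\alpha=3/5$ in the range is exactly where $\beta(\alpha)=\alpha$ in Corollary~\ref{cor:alpha}, so beyond this point the savings $\exp(-(\log x)^{\beta(\alpha)+o(1)})$ from Theorem~\ref{thm:mu-Lambda-sums} would no longer beat $\sqrt q=\exp(\tfrac12(\log x)^{\alpha+o(1)})$. Verifying this balance, and confirming that we remain in the first regime of Theorem~\ref{thm:mu-Lambda-sums} throughout the stated range, is the quantitative heart of the argument.
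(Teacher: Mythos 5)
Your overall strategy is sound and essentially matches the paper's sketch (which says only that it ``relates $\hSqx$ to $\hMqx$ and then exploits the bound''): use orthogonality for $\hDqx$, a Gauss-sum expansion for $\hSqx$, reduce to $|M(x,\chi^*)|$ over primitive characters of $2$-power conductor, and then invoke Theorem~\ref{thm:mu-Lambda-sums} together with the remark that conductors below $2^{\gamma_0}$ form a finite set handled classically. Your observation that only characters primitive modulo $2^\gamma$ have nonvanishing Gauss sum is correct, and it does give a loss of $\sqrt q$ rather than the $q$ that the paper's prose (``the savings \ldots has to be a little larger than $q$'') seems to envisage; in that sense your reduction is tighter than what the authors describe, though both yield the stated corollary for the same range of $q$.

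There are, however, two quantitative slips you should repair. First, the claim that ``$x\le Q_1$ for the relevant conductors'' throughout the range $\log q\le c_2(\log x)^{3/5}(\log\log x)^{-4/5}$ is false: for $\log q$ of order $(\log x)^{\alpha}$ with $\alpha<3/7$ one has $x>Q_1$, and for $\alpha<1/7$ even $x>Q_2$. Consequently the uniform bound $|M(x,\chi^*)|\ll x\exp\(-c(\log x)^{3/5}(\log\log x)^{-4/5}\)$ does \emph{not} hold; in the third regime the saving has exponent only $4/7<3/5$. You must treat all three regimes and verify in each that $\sqrt q\cdot E_1\ll\exp\(-c'(\log x)^{4/7}(\log\log x)^{-3/7}\)$ — which is straightforward, since in each regime the exponent of the saving ($1-2\alpha/3$, $(1+\alpha)/2$, or $4/7$) strictly exceeds $\alpha/2$, with minimum $4/7$ attained at small $q$ — but your current draft passes over this case analysis. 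Second, your closing paragraph is internally inconsistent: you claim the loss is $\sqrt q=\exp\(\tfrac12(\log x)^{\alpha+o(1)}\)$, yet assert that the threshold is at $\beta(\alpha)=\alpha$ (i.e.\ $\alpha=3/5$). With a $\sqrt q$ loss the break-even condition is $\beta(\alpha)=\alpha/2$, which gives $\alpha=6/7$; the threshold $\alpha=3/5$ is what arises from a $q$ loss, as the paper's discussion explains. This does not invalidate the proof of the corollary as stated (since $3/5<6/7$), but the explanation of where $3/5$ comes from is wrong in your write-up and should be corrected.
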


Note that the exponent $3/5$ in Corollary~\ref{cor:Green-exp-progr}
is the root $\alpha$ of the equation $\beta(\alpha)=\alpha$,
and we have
$$
\min\left\{\beta(\alpha):\alpha\in(0, 3/5]\right\} = 4/7.
$$
As with the previous results, Corollary~\ref{cor:Green-exp-progr} also holds
with $2^\gamma$ replaced by $p^\gamma$ for any fixed prime $p$.

Finally, we mention that the results of the present paper
can be used to estimate \emph{Fourier-Walsh coefficients} 
$\widehat\mu_n(\cA)$ associated with the M{\"o}bius function,
interpolating between the result of Green~\cite[Proposition 1.2]{Green}
and that of Bourgain~\cite[Theorem~1]{Bour}.
Recall that for a natural number $n$ and a set $\cA\in \{0,\ldots,n-1\}$,
the coefficient $\widehat\mu_n(\cA)$ is defined by
$$
\widehat\mu_n(\cA) = \sum_{(x_0, \ldots, x_{n-1}) \in \{0,1\}^n} \prod_{j\in \cA} (1-2 x_j) \mu\(\sum_{j=0}^{n-1} 2^j x_j\).
$$
Green~\cite[Proposition 1.2]{Green} uses~\eqref{eq:what-it-is} to give a
nontrivial bound on $\widehat\mu_n(\cA)$ for sets $\cA$ of cardinality
$|\cA|\ll n^{1/2}/\log n$, and this bound on the cardinality is essential
the entire approach of~\cite{Green}. Bourgain~\cite[Theorem~1]{Bour}
gives a nontrivial bound on $\widehat\mu_n(\cA)$ for arbitrary sets.
Using Theorem~\ref{thm:mu-Lambda-sums} in the argument of Green~\cite{Green} one can 
obtain a result that is stronger than \cite[Proposition 1.2]{Green} but which
also holds for larger sets.

\section{Outline of the proof}
\label{sec:outline}

To prove Theorem~\ref{thm:mu-Lambda-sums},
we begin by extending the bound of~\cite[Theorem~2.2]{BaSh} to
cover Dirichlet polynomials supported on shorter intervals;
see Theorem~\ref{thm:T_chi(M,N;t)-bound}.  Adapting various ideas
and tools from~\cite{BaSh} to exploit this result on Dirichlet polynomials,
we give a strong bound on the size of the relevant $L$-functions $L(s,\chi)$
in the case that $\Im(s)$ is not too large; see Theorem~\ref{thm:bound-on-LFunction}.
For larger values of $\Im(s)$, we apply a well known bound of Iwaniec~\cite[Lemma~8]{Iwan}.

Having the upper bounds on these $L$-functions at our disposal,
we combine them with certain results and ideas of~\cite{BaSh} and~\cite{Iwan}
to obtain a new zero-free region, which is wider than all previously known regions;
see Theorem~\ref{thm:nonvanish-LFunction}.

Next, we introduce and apply an extension of a result of Montgomery and Vaughan
\cite[Lemma~6.3]{MontVau} which bounds the logarithmic derivative of a complex
function in terms of its zeros; see Lemma~\ref{lem:Brady1}.  
Similarly to~\cite{MontVau}, we use Lemma~\ref{lem:Brady1}
as a device that allows us to consolidate old and new bounds on the size 
and the zero-free region of our $L$-functions $L(s,\chi)$, and doing so leads
us to Theorem~\ref{eq:curious-beasts}, which provides strong bounds on
these $L$-functions, their logarithmic derivatives, and their reciprocals.

Finally, to conclude the proof of Theorem~\ref{thm:mu-Lambda-sums},
in Section~\ref{sec:proof main} we relate (via Perron's formula)
the sums $M(x,\chi)$ and $\psi(x,\chi)$ to the bounds given in
Theorem~\ref{eq:curious-beasts}.  This connection involves the use of a parameter $T$,
which we optimise to achieve the desired result.  It is worth mentioning that the
final optimisation step is somewhat delicate since our bounds behave very differently
in different ranges (e.g., see~\eqref{eq:Theta}).
 
\section{Bounds on Dirichlet polynomials}
\label{sec:bounds-Dirichlet-polys}

In this section, we study Dirichlet polynomials of the form
$$
T_\chi(M,N;t)=\sum_{M<n\le M+N}\chi(n)n^{it}\qquad(t\in\RR).
$$
As in~\cite{BaSh}, to bound these polynomials we approximate
each $T_\chi(M,N;t)$ with a sum of the form
$$
\sum_{M<n\le M+N}\chi(n)\e(G(n)),
$$
where $G$ is a polynomial with real coefficients, and
$\e(t)=e^{2\pi it}$ for all $t\in\RR$. Our result is the following statement,
which is more flexible than~\cite[Theorem~2.2]{BaSh}. 

\begin{theorem}
\label{thm:T_chi(M,N;t)-bound}
For every $C>1$ there are effectively computable constants
$\gamma_0,\xi_0>0$ depending only on $C$ such that the following property holds.
For any modulus $q$ satisfying~\eqref{eq:minmax}
and any primitive character $\chi$ modulo $q$, the bound
\begin{equation}
\label{eq:mainboundT}
T_\chi(M,N;t) 
\ll N^{1-\xi_0/\varrho^2}
\end{equation}
holds uniformly for all $M,N,t\in\RR$ subject to the conditions
\begin{equation}
\label{eq:condsT}
M\ge N,\qquad
\fq^{\gamma_0}\le N\le q^C
\mand|t|\le q^{\frac14(\log M)/\log\fq},
\end{equation}
where $\varrho=(\log q)/\log N$
and  implied constant in~\eqref{eq:mainboundT} depends only on $C$.
\end{theorem}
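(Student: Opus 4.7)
My plan is to extend the argument behind \cite[Theorem~2.2]{BaSh} to the short, shifted intervals $(M,M+N]$ considered here, with $M \ge N$ and $\fq^{\gamma_0}\le N\le q^C$. The overall strategy mirrors~\cite{BaSh}: approximate $n^{it}$ on $(M,M+N]$ by $\e(G(n))$ for a real polynomial $G$ of bounded degree, and then invoke the character-sum-with-polynomial-phase bound proved there. The new technical ingredients are the shift by $M$, which forces the Taylor expansion to be centered at $n=M$ rather than at $0$, and the broader range of $|t|$ permitted in~\eqref{eq:condsT}, which dictates the minimal admissible degree of~$G$.

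First, I Taylor-expand $\log n = \log M + \log\bigl(1+(n-M)/M\bigr)$ around $M$ and truncate at degree $d$:
$$
n^{it} = \e\bigl(G(n)\bigr)\,\bigl(1 + O\bigl(|t|\,(d+1)^{-1}(N/M)^{d+1}\bigr)\bigr) \qquad \text{on }(M,M+N],
$$
with $G(n) = (t/2\pi)\bigl(\log M + \sum_{k=1}^{d}(-1)^{k-1}(n-M)^k/(k M^k)\bigr)$. When $N \le M/2$, the choice $d \asymp \log q$, which safely dominates $\log|t|$ by the hypothesis $|t|\le q^{(\log M)/(4\log\fq)}$, renders the cumulative approximation error negligible relative to $N^{1-\xi_0/\varrho^2}$. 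The remaining regime $M/2 < N \le M$ is handled by decomposing $(M,M+N]$ into a bounded number of subintervals on each of which the ratio of length to starting point is at most $1/2$, and applying the expansion piece by piece.

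Second, after substitution one is left with estimating character sums of the form $\sum_{M'<n\le M'+L}\chi(n)\,\e(\widetilde G(n))$, where $\widetilde G$ is a real polynomial in $n$ of degree $d \ll \log q$. This is precisely the type of sum handled in \cite[Theorem~2.2]{BaSh}; invoking that result, with the constant $\xi_0$ adjusted as needed to absorb the $O(1)$ number of subintervals and the new admissible polynomial degree, yields the claimed bound $N^{1-\xi_0/\varrho^2}$. The hard part will be to verify that the Postnikov-type expansion of $\chi(M'+m)$ employed in \cite{BaSh} remains effective for polynomial phases whose coefficients $t/(2\pi k{M'}^k)$ carry denominators depending on~$M'$; the key observation is that the admissibility condition~\eqref{eq:minmax} on $q$ forces the interaction between these denominators and $q$ to take place only through factors coprime to~$\fq$, so that the Weyl-type estimate underlying \cite[Theorem~2.2]{BaSh} carries through essentially unchanged, with the final constants $\xi_0$ and $\gamma_0$ depending only on~$C$.
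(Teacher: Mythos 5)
Your proposal diverges fundamentally from the paper's argument, and the divergence introduces a genuine gap.

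The paper does \emph{not} Taylor--expand $n^{it}$ about $n=M$. Instead it adapts the Korobov--Postnikov shifting trick: for parameters $s$ and $\nu$ chosen carefully (with $s\asymp\gamma/\varrho$ and $\nu=\lceil\gamma/(3s)\rceil$), one replaces $n$ by $n+\fq^syz$ with $1\le y,z\le\fq^s$, averages over $y,z$, and expands $(1+\fq^syz/n)^{it}$ in the small quantity $\fq^syz/n\le\fq^{3s}/M$, which is tiny regardless of the ratio $N/M$. This produces a polynomial phase in the \emph{shift} variable $yz$, which lines up perfectly with the Postnikov expansion of $\chi(1+\fq^s\bar nyz)$, so that the Vinogradov-type machinery of~\cite[Theorem~2.1]{BaSh} applies to the inner double sum. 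The error term from truncation is $NM^{-\nu}|t|\fq^{3s\nu}$, and it is \emph{this} quantity that the condition $|t|\le q^{\frac14(\log M)/\log\fq}$ is designed to beat.

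Your plan, in contrast, expands $n^{it}$ directly as a polynomial phase in $n-M$, and the quantitative claim that $d\asymp\log q$ makes the truncation error negligible is false in the stated range of $t$. The hypothesis allows $\log|t|\le\tfrac14(\log q)(\log M)/\log\fq$, and since $\log M\ge\log N\ge\gamma_0\log\fq$ with $\gamma_0$ a \emph{large} absolute constant (in the paper's proof $\gamma_0>e^{200}$), this gives $\log|t|\gtrsim\gamma_0\log q$, which is enormously larger than $\log q$. With $N\le M/2$ and $d\asymp\log q$ the error $|t|\,(N/M)^{d+1}$ is at least $|t|\,q^{-O(1)}$, which is nowhere near $1$. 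The degree you would actually need scales like $(\log q\cdot\log M)/((\log\fq)\log(M/N))$, and for $N$ comparable to $M$ this far exceeds what can be fed into any Postnikov--Vinogradov bound of the type in~\cite{BaSh} (which requires the degree to be $\ll\gamma/s$ for the iteration to save anything). Compounding this, the "hard part" you flag — making the coefficients $t/(2\pi kM^k)$ of a polynomial in $n-M$ compatible with the Postnikov expansion of $\chi$ — is not resolved by the coprimality remark: Postnikov's formula expands $\chi(n_0(1+\fq u))$ as a polynomial in $u$, which has a $p$-adic (not archimedean) origin, and there is no reason for a Taylor polynomial centered at an arbitrary real $M$ to align with that structure. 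The paper's shifting-and-averaging device is precisely the mechanism that produces a polynomial in the \emph{same} variable on both the $\chi$ side and the $n^{it}$ side, and your proposal is missing it.
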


\begin{proof}
Let $\gamma_0$ be a positive constant exceeding $e^{200}$, and put
$$
\gamma_1=\frac{\log N}{\log\fq}\mand
\eps=\frac{5}{4\gamma_1}=\frac{5\log\fq}{4\log N}.
$$
Using~\eqref{eq:minmax} and~\eqref{eq:condsT} we have
$$
\log q=\sum_{p\mid q}v_p(q)\log p\le\gamma\sum_{p\mid q}\log p=\gamma\log \fq
\le\gamma\gamma_1^{-1}\log N,
$$
hence $\varrho$ lies in $[C^{-1},\gamma/\gamma_1]$.
Let $s=\fl{\eps\gamma/\varrho}$.
Since $\eps\gamma/\varrho\ge\eps\gamma_1=\frac54$ we have
\begin{equation}
\label{eq:halfepsgamma}
\tfrac15\eps\gamma/\varrho\le
\eps\gamma/\varrho-1<s\le \eps\gamma/\varrho,
\end{equation}
and therefore $s\asymp\gamma/\varrho$.

Now let $\nu=\rf{\gamma/(3s)}$.
For any real number $x$, we have the estimate
$$
(1+x)^{it} = \e(tG(x))\(1+O(|t||x|^{\nu})\), 
$$
where $G$ is the polynomial given by
$$
G(x)=\frac{1}{2\pi}\sum_{r=1}^{\nu-1}(-1)^{r-1}\frac{x^r}{r}.
$$
Hence, uniformly for $n\in[M+1,M+N]$ and $y,z\in [1,\fq^s]$,
taking into account that $M\ge N$, we have
\begin{equation}
\label{eq:G-approx}
\(n+\fq^syz\)^{it}=n^{it}(1+\fq^syz/n)^{it}
=n^{it}\e(tG(\fq^syz/n))+O(M^{-\nu}|t|\fq^{3s\nu}).
\end{equation}
Let $\cN$ be the set of integers coprime to $q$ in the interval
$[M+1,M+N]$.  Shifting the interval $[M+1,M+N]$ by the 
amount $\fq^s yz$, where $1\le y,z\le\fq^s$, we have the uniform estimate
\begin{align*}
T_\chi(M,N;t)
&=\sum_{n\in\cN}\chi(n)n^{it}\\
&=\sum_{n\in\cN}\chi(n+\fq^s yz)(n+\fq^s yz)^{it}+O(\fq^{3s}).
\end{align*}
Using~\eqref{eq:G-approx} and averaging over all such $y$ and $z$, 
it follows that
$$
T_\chi(M,N;t)=\fq^{-2s}V+O(\fq^{3s}+NM^{-\nu}|t|\fq^{3s\nu}),
$$
where
$$
V=\sum_{n\in\cN}\chi(n)n^{it}\sum_{y,z=1}^{\fq^s}\chi(1+\fq^s \overline nyz)
\e(tG(\fq^syz/n)).
$$
In this expression, we have used $\overline n$ to denote an integer
such that $n\overline n\equiv 1\bmod q$.
Since $\deg G\le\gamma/(3s)$, we can proceed in a manner that is identical to
the proof of~\cite[Theorem~2.1]{BaSh} to derive the bound
\begin{equation}
\label{eq:St almost}
T_\chi(M,N;t)\ll N^{1-\xi_0/\varrho^2}+\fq^{3s}+NM^{-\nu}|t|\fq^{3s\nu}
\end{equation}
in place of~\cite[Equation (5.16)]{BaSh}.
Below, we show that $\fq^{3s}\le N^{5\eps}$, hence the term $\fq^{3s}$
can be dropped from~\eqref{eq:St almost} if one makes
suitable initial choices of $\gamma_0$ and $\xi_0$.

To finish the proof, we need to bound the last term in~\eqref{eq:St almost}.
Let $\vartheta$ be such that $N^\vartheta=|t|+3$.
Since $\nu=\rf{\gamma/(3s)}$, it follows that $3s\nu\le\gamma+3s$,
and by~\eqref{eq:halfepsgamma} we have
$\nu\ge\gamma/(3s)\ge\varrho/(3\eps)$; thus, setting $\kappa=(\log M)/\log N$
it follows that
$$
NM^{-\nu}|t|\fq^{3s\nu}
\ll N^{1-\kappa\varrho/(3\eps)+\vartheta}\fq^{\gamma+3s}.
$$
In view of~\eqref{eq:minmax} the relation $q=\fq^{\gamma\mu}$
holds for some $\mu\in[0.7,1]$,
which implies that $\fq^\gamma\le N^{2\varrho}$ and
(using~\eqref{eq:halfepsgamma} again) that $\fq^{3s}\le N^{5\eps}$ (as claimed above).
Hence, 
$$
NM^{-\nu}|t|\fq^{3s\nu}
\ll N^{1-\kappa\varrho/(3\eps)+\vartheta+2\varrho+5\eps}.
$$
Inserting this bound into~\eqref{eq:St almost} and recalling that
$\eps=1.25\gamma_1^{-1}$, we see that~\eqref{eq:mainboundT}
a consequence of the inequality
$$
\vartheta\le\varrho(\kappa\gamma_1/(3.75)-2)
-\xi_0/\varrho^2-6.25\gamma_1^{-1}.
$$
Since $\varrho\ge C^{-1}$ and $\gamma_1\ge\gamma_0$, this condition
is met if $\gamma_0$ is sufficiently large and $\xi_0$ is sufficiently small
since the last inequality in~\eqref{eq:condsT}
implies that $\vartheta\le \frac14\varrho\kappa\gamma_1+o(1)$
as $N\to\infty$. This completes the proof.
\end{proof}

\begin{corollary}
\label{cor:S(M)-dyadic-bound}
Fix $C>1$, and let $\gamma_0,\xi_0>0$ have the property
described in Theorem~\ref{thm:T_chi(M,N;t)-bound}.
Let $q$ be a modulus satisfying~\eqref{eq:minmax} and let 
$\chi$ be a primitive character modulo~$q$. Put
$$
\tau=|t|+3,\qquad
\ell=\log(q\tau)\mand
Q_0=\max\left\{\fq^{\gamma_0},\fq^{4\ell/\log q}\right\}.
$$
Then, uniformly for $s=\sigma+it\in\CC$ and $M\ge Q_0$ the bound
$$
\sum_{M<n\le 2M}\chi(n)n^{-s} 
\ll \begin{cases}
M^{1-\sigma-\xi_0(\log M)^2/(\log q)^2}+M^{-\sigma-1}Q_0^2
&\quad\hbox{if $\fq^{\gamma_0}\le M\le q^C$},\\
M^{1-\sigma}q^{-c_0}+M^{-\sigma-1}Q_0^2
&\quad\hbox{if $M>q^C$}.
\end{cases}
$$
holds with
\begin{equation}
\label{eq:c_0-description}
c_0=C(C-1)^2\xi_0
\end{equation}
and an implied constant which depends only on $C$.
\end{corollary}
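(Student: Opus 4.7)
The plan is to reduce the sum to a partial-summation estimate that invokes Theorem~\ref{thm:T_chi(M,N;t)-bound}. Writing $n^{-s}=n^{-\sigma}n^{-it}$ and setting $S(u)=\sum_{M<n\le u}\chi(n)n^{-it}=T_\chi(M,\lfloor u\rfloor-M;-t)$, Abel summation gives
$$
\sum_{M<n\le 2M}\chi(n)n^{-s}=S(2M)(2M)^{-\sigma}+\sigma\int_M^{2M}S(u)u^{-\sigma-1}\,du.
$$
The hypothesis $M\ge Q_0$ is tailored precisely so that the condition $|t|\le q^{(\log M)/(4\log\fq)}$ required by Theorem~\ref{thm:T_chi(M,N;t)-bound} is automatic: since $\log|t|\le\log\tau\le\ell$, the inequality $M\ge\fq^{4\ell/\log q}$ (built into $Q_0$) suffices. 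For $u-M<Q_0$ I use the trivial estimate $|S(u)|\le u-M\le Q_0$, whose contribution to the integral is $O(M^{-\sigma-1}Q_0^2)$; this produces the second summand in both cases of the corollary.

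For Case~1 ($\fq^{\gamma_0}\le M\le q^C$) and $u-M\ge Q_0$, I apply Theorem~\ref{thm:T_chi(M,N;t)-bound} directly with $N=\lfloor u\rfloor-M$ to obtain $|S(u)|\ll(u-M)^{1-\xi_0(\log(u-M))^2/(\log q)^2}$. Shrinking $\xi_0$ if necessary (so that roughly $\xi_0\le 1/(3C^2)$) ensures that the function $v\mapsto v^{1-\xi_0(\log v)^2/(\log q)^2}$ is monotone increasing on $[1,q^C]$, and a dyadic splitting of $\int_{M+Q_0}^{2M}S(u)u^{-\sigma-1}\,du$ then shows that the dominant contribution comes from the dyadic range with $u-M\asymp M$. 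Together with the boundary term $S(2M)(2M)^{-\sigma}$, which satisfies the same estimate, this yields the main term $M^{1-\sigma-\xi_0(\log M)^2/(\log q)^2}$.

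In Case~2 ($M>q^C$) one cannot apply Theorem~\ref{thm:T_chi(M,N;t)-bound} with $N>q^C$, so for $u-M>q^C$ I would decompose $(M,u]$ into blocks of some length $N_0\le q^C$ and invoke the Theorem on each with $\varrho=\log q/\log N_0$ to obtain per-block bound $N_0^{1-\xi_0/\varrho^2}$. Summing over the $\asymp(u-M)/N_0$ full blocks (and bounding the residual partial block by the Theorem or trivially) yields $|S(u)|\ll(u-M)q^{-c'}$ with $c'=\xi_0(\log N_0/\log q)^3$; choosing $N_0=q^{(C(C-1)^2)^{1/3}}\in(q^{C-1},q^C)$ produces exactly $c'=c_0$. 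For the intermediate regime $u-M\in[Q_0,q^C]$, I apply the Theorem directly to $S(u)$, and then use the inequality $M^{-\sigma-1}q^{2C}\le M^{1-\sigma}$ (which is automatic since $M>q^C$) to absorb that contribution into $M^{1-\sigma}q^{-c_0}$. Substituting into the Abel summation formula produces the claimed main term.

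The main technical obstacle is managing the non-uniform behaviour of the saving factor $N^{-\xi_0(\log N)^2/(\log q)^2}$ across the three regimes $u-M<Q_0$, $u-M\in[Q_0,q^C]$, and $u-M>q^C$ in Case~2, together with the optimisation of the block length $N_0$ that produces precisely the exponent $c_0=C(C-1)^2\xi_0$. Once this is set up, verifying that the secondary terms (partial residual blocks in the decomposition, boundary contributions in the partial summation, and the initial sliver $u-M<Q_0$) are each dominated either by $M^{1-\sigma}q^{-c_0}$ or by $M^{-\sigma-1}Q_0^2$ is a routine bookkeeping exercise.
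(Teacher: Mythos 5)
Your proposal follows essentially the same route as the paper's own proof: Abel summation onto $\int_M^{2M}S(u)u^{-\sigma-1}\,du$, a trivial bound yielding the $M^{-\sigma-1}Q_0^2$ term for the short initial sliver, a direct appeal to Theorem~\ref{thm:T_chi(M,N;t)-bound} for ranges up to length $q^C$, and a block decomposition for longer ranges to manufacture the $q^{-c_0}$ saving. Two places deserve a second look, though. First, in Case 2 the sentence ``use the inequality $M^{-\sigma-1}q^{2C}\le M^{1-\sigma}$ to absorb that contribution into $M^{1-\sigma}q^{-c_0}$'' is logically backwards as written: the inequality you quote furnishes only $M^{1-\sigma}$, not $M^{1-\sigma}q^{-c_0}$. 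The needed extra factor $q^{-c_0}$ must come from the saving $q^{-\xi_0C^3}$ that Theorem~\ref{thm:T_chi(M,N;t)-bound} gives on this range (and indeed $\xi_0 C^3>c_0=C(C-1)^2\xi_0$), so the conclusion is correct but the stated justification is not; the paper sidesteps this kind of bookkeeping entirely by packaging both regimes into a single nondecreasing function $f(u)$ and using the crude bound $V_\chi(u)\ll f(M)$ for all $u\in[Q_0,M]$, which uniformly controls the corner cases. Second, your fixed block length $N_0=q^{(C(C-1)^2)^{1/3}}$ can fall below $\fq^{\gamma_0}$ when $C$ is close to $1$, violating the lower constraint in \eqref{eq:condsT}; the paper's adaptive choice $J=\lceil u/q^C\rceil$, $N=u/J\in(\tfrac12q^C,q^C]$, together with $N>\tfrac12 q^C\gg q^{C-\varepsilon}$ for the astronomically large $q$ in play, avoids this and still delivers $(\log N/\log q)^3\ge C(C-1)^2$, i.e.\ the exponent $c_0$. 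These are fixable slips rather than fatal gaps; the skeleton of your argument is the paper's.
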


\begin{proof}
For simplicity we denote
$$
U_\chi(M)=\sum_{M<n\le 2M}\chi(n)n^{-s}
$$
and
$$
V_\chi(u)=T_\chi(M,u;-t)=\sum_{M<n\le M+u}\chi(n)n^{-it}
\qquad(0<u\le M).
$$
By partial summation it follows that
$$
U_\chi(M)=(2M)^{-\sigma}V_\chi(M)
+\sigma\int_0^M(u+M)^{-\sigma-1}V_\chi(u)\,du.
$$
Using the trivial bound $|V_\chi(u)|\le u+1$
for $0<u\le Q_0$ we see that
\begin{equation}
\label{eq:library1}
U_\chi(M)\ll M^{-\sigma}|V_\chi(M)|
+M^{-\sigma-1}Q_0^2
+M^{-\sigma-1}\int_{Q_0}^M|V_\chi(u)|\,du.
\end{equation}

We claim that the bound
\begin{equation}
\label{eq:panera}
V_\chi(u)\ll f(u)\qquad(u\ge Q_0)
\end{equation}
holds, where
$$
f(u)=\begin{cases}
u^{1-\xi_0(\log u)^2/(\log q)^2}
&\quad\hbox{if $\fq^{\gamma_0}\le u\le q^C$},\\
uq^{-c_0}
&\quad\hbox{if $u>q^C$}.
\end{cases}
$$
Indeed, let $u>Q_0$ be fixed.
If $u\in[\fq^{\gamma_0},q^C]$ then the bound~\eqref{eq:panera}
follows immediately from Theorem~\ref{thm:T_chi(M,N;t)-bound}
(note that our choice of $Q_0$ and the 
inequality $u>Q_0$ guarantee that the condition~\eqref{eq:condsT}
of Theorem~\ref{thm:T_chi(M,N;t)-bound} is met).
To bound $V_\chi(u)$ when $u>q^C$, we set
$$
J=\rf{\frac{u}{q^C}}\ge 2\mand
N=\frac{u}{J}\in(\tfrac12 q^C,q^C].
$$
Let $M_j=M+(j-1)N$ and put
$$
\cI_j=(M_j,M_{j+1}]=(M_j,M_j+N]\qquad(j\le J).
$$
Since $M_{J+1}=M+u$, we see that the interval $(M,M+u]$ is
a disjoint union of the intervals $\cI_j$, and thus
$$
V_\chi(u)=\sum_{j\le J}\sum_{n\in\cI_j}\chi(n)n^{-it}
=\sum_{j\le J}T_\chi(M_j,N;-t)\ll JN^{1-\xi_0(\log N)^2/(\log q)^2},
$$
where we have applied Theorem~\ref{thm:T_chi(M,N;t)-bound} in the last step.
Noting that $JN=u$ and $N>\tfrac12q^C\ge q^{C-1}$,
and taking into account~\eqref{eq:c_0-description},
we obtain~\eqref{eq:panera} for $u>q^{C}$.

Assuming as we may that $\xi_0<(3C^2)^{-1}$ it is easy to 
verify that $f(u)$ is an increasing function of $u$.
Hence, using~\eqref{eq:panera} (in the crude form $V_\chi(u)\ll f(M)$
for all $u \in [Q_0,M]$)
to bound the right side of~\eqref{eq:library1}, we derive that
$$
U_\chi(M)\ll M^{-\sigma}f(M)+M^{-\sigma-1}Q_0^2,
$$
and the result follows.
\end{proof}

\section{Bounds on $L$-functions}
\label{sec:bounds-on-L-functions-I}

We continue to use the notation of Section~\ref{sec:bounds-Dirichlet-polys}.
More specifically, let $\gamma_0,\xi_0>0$ have the property
described in Theorem~\ref{thm:T_chi(M,N;t)-bound} with $C=1001$ (say).
Let $q$ be a modulus satisfying~\eqref{eq:minmax},
$\chi$ a primitive character modulo~$q$, and put
\begin{equation}
\label{eq:def tau l M}
\tau=|t|+3,\qquad
\ell=\log(q\tau)\mand
Q_0=\max\left\{\fq^{\gamma_0},\fq^{4\ell/\log q}\right\}.
\end{equation}
Note that we can replace $\gamma_0$ with a larger value or replace
$\xi_0$ with a smaller value without changing the validity of
Theorem~\ref{thm:T_chi(M,N;t)-bound}.  Following Iwaniec~\cite{Iwan}
we put
\begin{equation}
\label{eq:def Y}
Y=\exp\(60(\ell\log 2\ell)^{3/4}\).
\end{equation}

We now present the following extension of~\cite[Lemma~6.1]{BaSh}. 

\begin{lemma}
\label{lem:etaXY-LFunction}
Suppose that the parameters $X$ and $\eta$ satisfy
\begin{equation}
\label{eq:Q_0Xeta-conditions}
X\ge Q_0,\qquad
\eta\in(0,\tfrac13)\mand
\eta\le\frac{\xi_0(\log X)^2}{(\log q)^2}-\frac{2\log\ell}{\log X}.
\end{equation}
Then for any $s=\sigma+it$ with $\sigma>1-\eta$ and    any primitive character $\chi$ modulo $q$we have
\begin{equation}
\label{eq:Kaldi}
L(s,\chi)\ll\begin{cases}
\eta^{-1}X^\eta&\quad\hbox{if $Y\le q^{1001}$},\\
\eta^{-1}X^\eta+Y^\eta \ell q^{-c_0}&\quad\hbox{if $Y>q^{1001}$},
\end{cases}
\end{equation}
where $c_0=10^9\xi_0$ and the implied constant in~\eqref{eq:Kaldi} is absolute.
\end{lemma}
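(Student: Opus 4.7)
The plan is to extend the dyadic argument of \cite[Lemma~6.1]{BaSh}, substituting its short-character-sum input with the sharper Corollary~\ref{cor:S(M)-dyadic-bound}. My starting point would be a truncation of $L(s,\chi)$ at $n=Y$, via either a Mellin-type contour shift or an Abel-summation argument applied to $\sum_{n\le u}\chi(n)$; this expresses
\[
L(s,\chi)=\sum_{n\le Y}\chi(n)n^{-s}+R(s,\chi),
\]
where $R(s,\chi)$ is governed by bounds on the character partial sums for $u$ slightly beyond $Y$. The threshold \eqref{eq:def Y} is exactly Iwaniec's, calibrated so that $R(s,\chi)$ is absorbed by the bound claimed in \eqref{eq:Kaldi} throughout the strip $\sigma>1-\eta$.

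Next, I would split $\sum_{n\le Y}\chi(n)n^{-s}$ into the initial segment $n\le Q_0$, bounded trivially by $\sum_{n\le Q_0}n^{-\sigma}\ll\eta^{-1}Q_0^\eta\le\eta^{-1}X^\eta$ since $X\ge Q_0$, together with a dyadic decomposition of $(Q_0,Y]$ into intervals $(M,2M]$ with $M=2^k$. To each block I would apply Corollary~\ref{cor:S(M)-dyadic-bound} with $C=1001$: when $M\le q^{1001}$ this gives $M^{1-\sigma-\xi_0(\log M)^2/(\log q)^2}+M^{-\sigma-1}Q_0^2$, and when $M>q^{1001}$ it gives $M^{1-\sigma}q^{-c_0}+M^{-\sigma-1}Q_0^2$. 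The parasitic $M^{-\sigma-1}Q_0^2$ pieces form a geometric series totalling $\ll Q_0^\eta\le X^\eta$, hence are absorbed.

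For the genuine dyadic contributions I would split further at $M=X$. On $Q_0\le M\le X$ the factor $M^{-\xi_0(\log M)^2/(\log q)^2}$ is at worst $1$, and summing $M^\eta$ over dyadic $M\le X$ yields the $\eta^{-1}X^\eta$ term through the geometric series with ratio $2^\eta$. On $X<M\le q^{1001}$ the third inequality in \eqref{eq:Q_0Xeta-conditions} yields
\[
M^{\eta-\xi_0(\log M)^2/(\log q)^2}\le M^{-2(\log M)(\log\ell)/(\log X)^2},
\]
which drives the corresponding dyadic tail down to $O(\ell^{-2})$, also absorbable. Finally, if $Y>q^{1001}$ then the blocks with $q^{1001}<M\le Y$ sum geometrically to $\ll Y^{1-\sigma}q^{-c_0}\le Y^\eta q^{-c_0}$, and bundling in the $O(\log Y)=O(\ell)$ count of such blocks together with the truncation error $R(s,\chi)$ produces the second term $Y^\eta\ell q^{-c_0}$ in \eqref{eq:Kaldi}.

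The step I expect to demand the most care is the verification that the margin $-(2\log\ell)/\log X$ built into \eqref{eq:Q_0Xeta-conditions} is precisely sufficient to dominate all subdominant $\ell$-power losses (from the truncation error, from the count of dyadic blocks, and from the secondary $M^{-\sigma-1}Q_0^2$ terms), uniformly both when $Y\le q^{1001}$ and when $Y>q^{1001}$. This bookkeeping should closely parallel the corresponding step in \cite[Lemma~6.1]{BaSh}, with the dyadic estimates upgraded exactly as above.
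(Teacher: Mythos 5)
Your proposal follows essentially the same route as the paper: truncate at Iwaniec's threshold $Y$, decompose the remaining range dyadically, feed Corollary~\ref{cor:S(M)-dyadic-bound} with $C=1001$ into the blocks split at $q^{1001}$, and use the third condition of~\eqref{eq:Q_0Xeta-conditions} to make the blocks with $M\ge X$ contribute $O(\ell^{-2})$. The paper shifts the split from $Q_0$ to a dyadic point $X_*\in[X,2X)$ so that every block has $M\ge X$ (avoiding the small geometric tail you handle over $(Q_0,X]$), but this is cosmetic. One small correction: the tail $\sum_{n>Y}\chi(n)n^{-s}$ is controlled by Iwaniec's bound $\ll\eta^{-1}\fq^{100\eta}$, which is absorbed into $\eta^{-1}X^\eta$ (using $X\ge Q_0\ge\fq^{\gamma_0}$); it does not feed into the $Y^\eta\ell q^{-c_0}$ term, which comes solely from the $O(\ell)$ dyadic blocks above $q^{1001}$.
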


\begin{proof}
From the proof of~\cite[Lemma~8]{Iwan} we see that the bounds
\begin{equation}
\label{eq:CopyPaper1}
\left|\sum_{n>Y}\chi(n)n^{-s}\right|\ll\eta^{-1}\fq^{100\eta}
\end{equation}
and
$$
\left|\sum_{n\le Y}\chi(n)n^{-s}\right|\ll\eta^{-1}Y^\eta
$$
hold for $\sigma>1-\eta$. In particular,~\eqref{eq:Kaldi} follows immediately
when $2X>Y$.  From now on we assume that $2X\le Y$.

Next, let $L$ be the unique integer such that the quantity
$X_*=2^{-L}Y$ lies in the interval $[X,2X)$; note that
$L\ge 1$ since $Y\ge 2X$. Let $\cS$ be the
collection of numbers of the form $M=2^{j-1}X_*$ with $j=1,\ldots,L$,
and notice that the interval $(X_*,Y]$ splits into $L$ disjoint subintervals
of the form $(M,2M]$ with $M\in\cS$, so that
$$
\sum_{X_*<n\le Y}\chi(n)n^{-s}=\sum_{M\in\cS}\sum_{M<n\le 2M}\chi(n)n^{-s}.
$$
We apply Corollary~\ref{cor:S(M)-dyadic-bound} with $C = 1001$. Hence, by~\eqref{eq:c_0-description}
we can even take a slightly larger value of $c_0$ than $c_0=10^9\xi_0$.

Now, let $\cS_1$ and $\cS_2$ be the (potentially empty)
sets of numbers $M\in\cS$ for which $M\le q^{1001}$ and $M>q^{1001}$, respectively.
Since $M\ge Q_0$ for every $M\in\cS$ by~\eqref{eq:Q_0Xeta-conditions},
we have by Corollary~\ref{cor:S(M)-dyadic-bound}:
\begin{align*}
\sum_{M\in\cS_1}\sum_{M<n\le 2M}\chi(n)n^{-s}
&\ll\sum_{M\in\cS_1}\(M^{\eta-\xi_0(\log M)^2/(\log q)^2}+M^{\eta-2}Q_0^2\),\\
\sum_{M\in\cS_2}\sum_{M<n\le 2M}\chi(n)n^{-s}
&\ll\sum_{M\in\cS_2}\(M^\eta q^{-c_0}+M^{\eta-2}Q_0^2\).
\end{align*}
Using~\eqref{eq:Q_0Xeta-conditions} and the fact that $|\cS_1| \le |\cS|=L\ll\ell$ we have
$$
\sum_{M\in\cS_1} M^{\eta-\xi_0(\log M)^2/(\log q)^2}
\le \sum_{M\in\cS_1} M^{2\log\ell/\log X}
\ll\sum_{M\in\cS_1}\ell^{-2}\ll 1.
$$
If $Y\le q^{1001}$, then $\cS_2=\varnothing$; for larger values of $Y$
we use the bound
$$
\sum_{M\in\cS_2} M^\eta q^{-c_0}\ll Y^\eta \ell q^{-c_0}.
$$
Putting the preceding bounds together, and taking into account that
$$
\sum_{M\in\cS}M^{\eta-2}Q_0^2 
\le Q_0^2 \sum_{j=1}^L(2^{j-1}X_*)^{\eta-2}  \ll  Q_0^2 X_*^{\eta-2}
\ll X^\eta,
$$
we derive that
\begin{equation}
\label{eq:CopyPaper2}
\sum_{X_*<n\le Y}\chi(n)n^{-s}\ll\begin{cases}
X^\eta&\quad\hbox{if $Y\le q^{1001}$},\\
X^\eta+Y^\eta \ell q^{-c_0}&\quad\hbox{if $Y>q^{1001}$}.
\end{cases}
\end{equation}

To finish the proof, we observe that
\begin{equation}
\label{eq:CopyPaper3}
\left|\sum_{n\le X_*}\chi(n)n^{-s}\right|
\le \sum_{n\le X_*}n^{\eta-1}
\le 1+\eta^{-1}(X_*^\eta-1)
\ll \eta^{-1}X^\eta.
\end{equation}
The result follows by combining the bounds~\eqref{eq:CopyPaper1}, 
\eqref{eq:CopyPaper2} and~\eqref{eq:CopyPaper3}.
\end{proof}

\begin{theorem}
\label{thm:bound-on-LFunction}
There are constants $A, B>0$ that depend only on $\fq$ and have
the following property. Put
$$
\eta_1=\frac{A}{(\log q)^{2/3}(\log\log q)^{1/3}},\qquad
\eta_2=\frac{A\log q}{\ell},
$$
and 
$$
T=\exp\(B(\log q)^{5/3}(\log\log q)^{1/3}\).
$$
Then for any primitive character $\chi$ modulo $q$
and any $s=\sigma+it$ with $\sigma>1-\eta_1$ and $|t|\le T$
we have
$$
L(s,\chi)\ll \eta_1^{-1},
$$
whereas for any $s=\sigma+it$ with $\sigma>1-\eta_2$ and $|t|>T$ we have
$$
L(s,\chi)\ll \eta_2^{-1},
$$
where the implied constants depend only on $\fq$.
\end{theorem}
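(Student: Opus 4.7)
The plan is to apply Lemma~\ref{lem:etaXY-LFunction} in each of the two regimes of $|t|$, making different choices of the auxiliary parameter $X$ in each case. The guiding observation is that the main term $\eta^{-1}X^\eta$ in~\eqref{eq:Kaldi} is of size $\eta^{-1}$ exactly when $\eta\log X = O(1)$, so $X$ should be chosen to achieve this with $\eta$ equal to the target value $\eta_1$ or $\eta_2$, subject to $X \ge Q_0$ and the third condition in~\eqref{eq:Q_0Xeta-conditions}. Throughout, implied constants depend on $\fq$ but not on $q, t, \sigma$ or $\chi$.

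For the first bound ($|t| \le T$, $\eta = \eta_1$), the range hypothesis and $\log T = B(\log q)^{5/3}(\log\log q)^{1/3}$ give $\ell \ll (\log q)^{5/3}(\log\log q)^{1/3}$, $\log\ell \asymp \log\log q$, and $\log Q_0 \ll \log\fq \cdot (\log q)^{2/3}(\log\log q)^{1/3}$. First I would set
$$
\log X = K(\log q)^{2/3}(\log\log q)^{1/3}
$$
for a constant $K > 4B\log\fq$ to be chosen. Then $\eta_1\log X = AK = O(1)$, so $X^{\eta_1} = O(1)$, and $X \ge Q_0$ is automatic. The third condition in~\eqref{eq:Q_0Xeta-conditions} reduces, after dividing through by $(\log\log q)^{2/3}/(\log q)^{2/3}$, to an inequality of the shape $A + O(1/\log\log q) \le \xi_0 K^2$, which holds once $K$ is large and $A$ small. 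When $Y > q^{1001}$, the extra term $Y^{\eta_1}\ell q^{-c_0}$ is absorbed because $\eta_1 \log Y \ll (\log q)^{7/12}(\log\log q)^{2/3} = o(\log q)$, so that $Y^{\eta_1}q^{-c_0} \le q^{-c_0/2}$ for large $q$.

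For the second bound ($|t| > T$, $\eta = \eta_2$), the hypothesis $\ell > \log T \ge B(\log q)^{5/3}(\log\log q)^{1/3}$ forces $Q_0$ to be moderately large, and I would take the minimal admissible choice $X = Q_0$, so that $\log X = 4\ell\log\fq/\log q$ and $\eta_2\log X = 4A\log\fq = O(1)$. The third condition of~\eqref{eq:Q_0Xeta-conditions}, after multiplication by $\ell/\log q$, becomes
$$
A + \frac{\log\ell}{2\log\fq} \le \frac{16\xi_0(\log\fq)^2 \ell^3}{(\log q)^5},
$$
whose right-hand side is at least $16\xi_0(\log\fq)^2 B^3 \log\log q$ by the range hypothesis on $\ell$; thus the inequality holds once $B$ is sufficiently large (relative to $A$, $\xi_0$, $\fq$). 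The extra term $Y^{\eta_2}\ell q^{-c_0}$ is again negligible because $\eta_2\log Y = 60A(\log q)(\log 2\ell)^{3/4}/\ell^{1/4}$ as a function of $\ell$ attains its maximum on $[\log T, \infty)$ at the left endpoint, where the bound $O((\log q)^{7/12}(\log\log q)^{2/3}) = o(\log q)$ from the first case still applies.

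The main technical obstacle is the order in which constants must be fixed. Having chosen $\gamma_0,\xi_0$ via Theorem~\ref{thm:T_chi(M,N;t)-bound} with $C = 1001$, one selects $A$ small, then $K$ large relative to $A$, and finally $B$ large enough to make the second case work, all without spoiling the first case. A secondary subtlety is that the \emph{same} constant $A$ must appear in both $\eta_1$ and $\eta_2$; this is harmless because any common value that is small enough to satisfy both sets of inequalities suffices.
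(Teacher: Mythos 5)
Your argument follows essentially the same route as the paper's: both invoke Lemma~\ref{lem:etaXY-LFunction} with $\eta$ set to the target value and $X$ chosen so that $\eta\log X=O(1)$, and both split at $|t|=T$. The paper parametrizes the transition via an auxiliary quantity $\ell_0$ (where $Q_0=M$) and takes $X=\max\{M,Q_0^{c_2}\}$; your version fixes the transition through $B$ first and compensates with $K$ and $B$, which is an equivalent reshuffling (and your use of $\log\log q$ in place of $\log\ell$ is legitimate since $\log\ell\asymp\log\log q$ throughout the range under consideration).

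Two small points are worth tidying. First, in case~1 the stated simplification is slightly off: dividing the third condition of~\eqref{eq:Q_0Xeta-conditions} by $(\log\log q)^{2/3}/(\log q)^{2/3}$ produces something of the shape $A/\log\log q+O(1/K)\le\xi_0K^2$, not $A+O(1/\log\log q)\le\xi_0K^2$; the conclusion is in fact even easier than you claim, but the algebra should be corrected. Second, in case~2 you verify the inequality
$$
A+\frac{\log\ell}{2\log\fq}\le\frac{16\xi_0(\log\fq)^2\ell^3}{(\log q)^5}
$$
only at the endpoint $\ell=\log T$, yet the left-hand side grows without bound as $\ell\to\infty$. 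You should add a sentence observing that for $\ell\ge\log T$ the right side (polynomial in $\ell$) grows strictly faster than the left (logarithmic in $\ell$), so once the inequality holds at $\ell=\log T$ it holds for all $\ell$ in the range; this monotonicity is straightforward but is needed to make the argument complete.
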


\begin{proof}
In what follows, we write
\begin{equation}
\label{eq:M def} 
M=\exp\((\tfrac14\xi_0)^{-1/3}(\log q)^{2/3}(\log\ell)^{1/3}\)
\end{equation}
for some constant $\xi_0$, to be chosen later. We also recall the choice of the parameters~\eqref{eq:def tau l M}.
Adjusting the values of $\gamma_0$ and $\xi_0$ if necessary, we can assume that
$M\ge\fq^{\gamma_0}$. Consequently, if $\ell_0$
is determined via the relation
\begin{equation}
\label{eq:l0 def}
\frac{4\ell_0\log\fq}{\log q}=(\tfrac14\xi_0)^{-1/3}(\log q)^{2/3}(\log\ell_0)^{1/3},
\end{equation}
then we have
\begin{equation}
\label{eq:Q0 M}  
Q_0\le M\quad\Longleftrightarrow\quad\ell\le\ell_0
\end{equation}
and the asymptotic relation
\begin{equation}
\label{eq:stunning-props}
\ell_0\sim c_1(\log q)^{5/3}(\log\log q)^{1/3}
\end{equation}
holds with some absolute constant $c_1>0$. 

To meet the conditions~\eqref{eq:Q_0Xeta-conditions} in Lemma~\ref{lem:etaXY-LFunction}
we must have $X\ge\max\{M,Q_0\}$. However, we also want $X$ to be reasonably small
in order to derive a strong upper bound on $|L(s,\chi)|$.  We take
$$
X=\max\{M,Q_0^{c_2}\},
$$
where $c_2\ge 1$ is a constant that is needed for technical reasons.

Given that the quantity $\eta^{-1}X^\eta$ appears in the bound~\eqref{eq:Kaldi}
on $L(s,\chi)$, the optimal choice of $\eta$ (at least for small values of $\ell$)
is $\eta=(\log X)^{-1}$.  This idea leads us to define
$$
\eta=\begin{cases}
\displaystyle\frac{(\tfrac14\xi_0)^{1/3}}{(\log q)^{2/3}(\log\ell)^{1/3}}
&\quad\hbox{if $\ell\le\ell_0$},\\ \\
\displaystyle\frac{\log q}{4c_2\ell\log\fq}
&\quad\hbox{if $\ell>\ell_0$}.
\end{cases}
$$
In particular, we see from~\eqref{eq:Q0 M} that 
\begin{equation}
\label{eq:X eta}
X^\eta  \ll 1
\end{equation}

It is also convenient to observe that 
$$
\eta \asymp \min\left\{ \frac{1}{(\log q)^{2/3}(\log\ell)^{1/3}}, \frac{\log q}{ \ell} \right\}
$$
in view of~\eqref{eq:stunning-props}, and since
$\log(qT)\asymp\ell_0$ we see that $\eta$ is asymptotic
to the first [resp.~second] term in the above minimum
if $|t|\le T$ [resp.~$|t|>T$]. Thus, it suffices 
to establish that 
\begin{equation}
\label{eq:L eta}
L(s,\chi)\ll\eta^{-1}\qquad(\sigma>1-\eta).
\end{equation}

Note that for $\ell\le\ell_0$ we have
\begin{equation}
\label{eq:sleepover}
\eta\le\frac{(2\xi_0)^{1/3}(\log\ell)^{2/3}}{(\log q)^{2/3}}
=\frac{\xi_0(\log M)^2}{(\log q)^2}-\frac{2\log\ell}{\log M}
\le\frac{\xi_0(\log X)^2}{(\log q)^2}-\frac{2\log\ell}{\log X},
\end{equation}
and adjusting the value of $\gamma_0$ if necessary, we also have
$\eta<\tfrac13$; therefore, all of the conditions~\eqref{eq:Q_0Xeta-conditions}
are met. 

For $\ell>\ell_0$ we see that~\eqref{eq:sleepover} again holds as a 
consequence of~\eqref{eq:stunning-props}
provided that $c_2$ is sufficiently large.  We can also guarantee that
$\eta<\tfrac13$ by taking $c_2$ large enough; for example,
$c_2 > \tfrac{3}{4} \log 2$ suffices.
Hence, for a suitable choice of $c_2$ the conditions~\eqref{eq:Q_0Xeta-conditions}
are all met when $\ell>\ell_0$.

By Lemma~\ref{lem:etaXY-LFunction} we have
$$
L(s,\chi)\ll\eta^{-1}X^\eta+Y^\eta \ell q^{-c_0}\qquad(\sigma>1-\eta).
$$
where $Y$ is given by~\eqref{eq:def Y}.

Recalling~\eqref{eq:X eta}, we see that to establish~\eqref{eq:L eta}
it suffices to show that
\begin{equation}
\label{eq:I'm-cold}
Y^\eta \ell q^{-c_0}\ll\eta^{-1}
\end{equation}
with the choices we have made.
When $\eta\asymp(\log q)^{-2/3}(\log\ell)^{-1/3}$
it is easy to see that~\eqref{eq:I'm-cold} holds whenever
$\ell\ll(\log q)^{20/9}(\log\log q)^{5/9}$. Taking into account~\eqref{eq:stunning-props}, 
this establishes~\eqref{eq:I'm-cold} when $\ell\le\ell_0$.  

For larger $\ell$, note that
we can ensure that the inequality
$$
\frac{\log q}{4c_2\ell\log\fq}
\cdot 60(\ell\log 2\ell)^{3/4}\le c_0\log q-\log\log q
$$
by taking $c_2$ sufficiently large at the outset.  Exponentiating
both sides of this inequality, we obtain~\eqref{eq:I'm-cold} when $\ell>\ell_0$.
\end{proof} 

\section{The zero-free region}
\label{sec:the-zero-free-region}

We continue to use the notation of the
first paragraph of Section~\ref{sec:bounds-on-L-functions-I}, in particular
we use the parameters defined in~\eqref{eq:def tau l M} and~\eqref{eq:def Y}.

The following technical result originates with the work of Iwaniec~\cite{Iwan};
the specific statement given here is due to Banks and Shparlinski~\cite[Lemma~6.2]{BaSh}.

\begin{lemma}
\label{lem:technicallemma}
Let $q$ be a fixed modulus. Let $\eta\in(0,\frac13)$, $K\ge e$ and $T\ge 1$
be given numbers, which may depend on $q$. Put
$$
\vartheta=\frac{\eta}{400\log K},
$$
and suppose that
$$
8\log(5\log 3q)+\frac{24}{\eta}\log(2K/5\vartheta)\le\frac{1}{15\vartheta}.
$$
Suppose that $|L(s,\chi)|\le K$ for all primitive characters $\chi$
modulo~$q$ and all $s$ in the region
$\left \{s\in\CC:\sigma>1-\eta,~|t|\le 3T\right\}$. 
Then there is at most one primitive character $\chi$ modulo $q$
such that $L(s,\chi)$ has a zero in 
$\left\{s\in\CC:\sigma>1-\vartheta,~|t|\le T\right\}$.
If such a character exists, then it is a real character, and
the zero is unique, real and simple.
\end{lemma}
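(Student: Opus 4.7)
The plan is to follow the classical Landau--Page--Iwaniec strategy. The two key ingredients are: (a) a Jensen / Borel--Carath\'eodory estimate converting the pointwise hypothesis $|L(s,\chi)|\le K$ into bounds for the number of zeros of $L(s,\chi)$ in a smaller disc and for $\Re(-L'/L)(s,\chi)$ via the Hadamard partial-fraction formula; and (b) the classical positivity inequality $3+4\cos\theta+\cos 2\theta\ge 0$, applied through the Dirichlet series of $\zeta(s)^3 L(s,\chi)^4 L(s,\chi^2)$, which forces any zero close to $\sigma=1$ to arise from a real primitive character and to be a real zero.

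First I would fix a putative zero $\rho=\beta+i\gamma$ of $L(s,\chi)$ with $\beta>1-\vartheta$ and $|\gamma|\le T$, and apply Jensen's formula to $L(s,\chi)$ on a disc of radius of order $\eta$ centred at $1+\eta+i\gamma$. Combined with the elementary Euler-product lower bound $|L(1+\eta+i\gamma,\chi)|\gg\eta$, this produces a bound of size $\ll\log(K/\eta)$ for the number of zeros of $L(\cdot,\chi)$ in a slightly smaller sub-disc, and, via the Hadamard partial fraction formula for $L'/L$, the estimate
\[
\Re\frac{-L'}{L}(s_0,\chi)=-\sum_{\rho'}\Re\frac{1}{s_0-\rho'}+O\bigl(\log(K/\vartheta)+\log 3q\bigr)
\]
at $s_0=1+\vartheta+i\gamma$, where every summand is nonnegative since $\Re s_0>\Re\rho'$. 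Keeping only the contribution from $\rho$ itself yields the upper bound
\[
\Re\frac{-L'}{L}(s_0,\chi)\le -\frac{1}{1+\vartheta-\beta}+O\bigl(\log(K/\vartheta)+\log 3q\bigr).
\]

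Next, I would invoke the positivity $3+4\cos\theta+\cos 2\theta\ge 0$, summed over primes with weights $\log p/p^{m(1+\vartheta)}$, which gives
\[
3\,\Re\frac{-\zeta'}{\zeta}(1+\vartheta)+4\,\Re\frac{-L'}{L}(1+\vartheta+i\gamma,\chi)+\Re\frac{-L'}{L}(1+\vartheta+2i\gamma,\chi^2)\ge 0.
\]
The first term equals $3/\vartheta+O(1)$, coming from the simple pole of $\zeta$ at $s=1$. The second term I would bound above by the display from the previous paragraph. For the third term I would apply the analogous log-derivative estimate to $L(\cdot,\chi^2)$ (an extra $1/\vartheta$ being absorbed separately if $\chi^2=\chi_0$). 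Rearranging yields
\[
\frac{4}{1+\vartheta-\beta}\le\frac{3}{\vartheta}+O\bigl(\log(K/\vartheta)+\log 3q\bigr),
\]
and the precise numerical hypothesis of the lemma is calibrated so that the right-hand side is strictly smaller than $4/\vartheta$, forcing $1-\beta>\vartheta$---a contradiction---unless $\chi^2=\chi_0$ and $\gamma=0$; equivalently, $\chi$ is a real character and $\rho$ is a real zero.

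Finally, for uniqueness of the exceptional character across distinct primitive real characters and simplicity of the exceptional real zero, I would rerun the argument with the classical nonnegative product $\zeta(s)L(s,\chi_1)L(s,\chi_2)L(s,\chi_1\chi_2)$: two distinct real characters $\chi_1,\chi_2$ each carrying an exceptional zero in $(1-\vartheta,1)$ would produce two contributions of size $\asymp 1/\vartheta$ on the zero side, whereas the pole of $\zeta$ supplies only one matching contribution, and the numerical threshold again fails. A would-be double real zero is handled identically by counting it with multiplicity in the same partial-fraction sum. The main obstacle is arithmetic bookkeeping: the explicit constants produced by Jensen's formula and by the Euler-product lower bound on $|L(1+\eta+i\gamma,\chi)|$ must be tracked so that the final inequality fails precisely at the calibrated threshold $8\log(5\log 3q)+(24/\eta)\log(2K/5\vartheta)\le 1/(15\vartheta)$ in the lemma rather than at some weaker or stronger version.
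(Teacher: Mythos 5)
The paper does not actually prove this lemma---it cites it verbatim from \cite[Lemma~6.2]{BaSh} and traces the method to Iwaniec~\cite{Iwan}. Your high-level strategy (Jensen plus Borel--Carath\'eodory to turn the size hypothesis $|L(s,\chi)|\le K$ into a partial-fraction/zero-counting bound, followed by the $3+4\cos\theta+\cos2\theta\ge0$ repulsion inequality, and then $\zeta L(\cdot,\chi_1)L(\cdot,\chi_2)L(\cdot,\chi_1\chi_2)$ for uniqueness/simplicity) is indeed the Landau--Page--Iwaniec argument that underlies the cited result.

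However, there is a genuine arithmetic error in the core step that no amount of ``bookkeeping'' can repair as written. You evaluate the $3$--$4$--$1$ inequality at $s_0=1+\vartheta+i\gamma$ and derive
$$
\frac{4}{1+\vartheta-\beta}\le\frac{3}{\vartheta}+O\bigl(\log(K/\vartheta)+\log 3q\bigr),
$$
then claim that making the error term small forces $1-\beta>\vartheta$. But if $\beta>1-\vartheta$ then $1+\vartheta-\beta<2\vartheta$, so the left side exceeds $4/(2\vartheta)=2/\vartheta$, which must be compared with the \emph{pole} contribution $3/\vartheta$ on the right. Since $2/\vartheta<3/\vartheta$ unconditionally, no contradiction arises for any $K$, $q$, or $\vartheta$; your version of the inequality is vacuous. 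The fix is to evaluate at $1+\delta+i\gamma$ with $\delta$ a larger multiple of $\vartheta$: setting $\delta=a\vartheta$ gives a contradiction exactly when $(a-3)/\bigl(a(a+1)\vartheta\bigr)>E$, so one must take $a>3$. The specific numerics of the lemma are calibrated for $a=5$, where $\frac{4}{6\vartheta}-\frac{3}{5\vartheta}=\frac{1}{15\vartheta}$ matches the threshold $8\log(5\log3q)+\frac{24}{\eta}\log(2K/5\vartheta)\le\frac{1}{15\vartheta}$. A secondary gap: your estimate for the $L(\cdot,\chi^2)$ term silently invokes the size hypothesis for $\chi^2$, but the hypothesis only covers \emph{primitive} characters modulo $q$, and $\chi^2$ is generally imprimitive; one must pass to the primitive character inducing $\chi^2$ and account for the finite Euler product separating the two $L$-functions.
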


The main result of this section is the following.

\begin{theorem}
\label{thm:nonvanish-LFunction}
There are constants $A,B>0$ that depend only on $\fq$ and have
the following property. Put 
$$
\vartheta_1=\frac{A}{(\log q)^{2/3}(\log\log q)^{1/3}},\qquad
\vartheta_2=\frac{A\log q}{\ell},
$$
and  
$$
T=\exp\(B(\log q)^{5/3}(\log\log q)^{1/3}\).
$$
Then for any primitive character $\chi$ modulo $q$,
the Dirichlet $L$-function $L(s,\chi)$ does not vanish in the region
$$
\{s\in\CC:\sigma>1-\vartheta_1,~|t|\le T\}\cup
\{s\in\CC:\sigma>1-\vartheta_2,~|t|>T\}.
$$
\end{theorem}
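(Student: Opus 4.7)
The plan is to deduce the zero-free region directly from Theorem~\ref{thm:bound-on-LFunction} by means of Lemma~\ref{lem:technicallemma}. The input is the pair of $L$-function upper bounds already established, and the output should match the stated shape of $\vartheta_1$ and $\vartheta_2$ up to rescaling of the constants $A,B$, which we are free to perform.

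First I would apply Theorem~\ref{thm:bound-on-LFunction} after slightly adjusting $B$ (equivalently, replacing $T$ by $T/3$) so that the bound $|L(s,\chi)|\le K_j$ with $K_j\asymp\eta_j^{-1}$ holds on the enlarged strip $|t|\le 3T$ that Lemma~\ref{lem:technicallemma} requires. I would then invoke Lemma~\ref{lem:technicallemma} with $\eta=\eta_j$ and $K=K_j$. Its technical hypothesis
$$
8\log(5\log 3q)+\frac{24}{\eta}\log(2K/5\vartheta)\le\frac{1}{15\vartheta}
$$
is routine to verify: with $\eta$ of order $(\log q)^{-2/3}(\log\log q)^{-1/3}$ (or $(\log q)/\ell$) and $\log K\asymp\log\log q$, the left-hand side is $O((\log q)^{2/3}(\log\log q)^{1+o(1)})$ while the right-hand side grows like $(\log q)^{2/3}(\log\log q)^{1/3}/A$; the inequality then holds once $A$ is chosen small enough. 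The lemma produces a zero-free region of the form $\{\sigma>1-\vartheta_j\}\cap\{|t|\le T\}$ (resp.~$|t|>T$) with $\vartheta_j=\eta_j/(400\log K_j)$. Absorbing the factor $\log K_j\asymp\log\log q$ into a smaller value of $A$ yields the functional form of $\vartheta_j$ stated in the theorem.

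The main obstacle is the caveat attached to Lemma~\ref{lem:technicallemma}: at most one real primitive character $\chi^\ast$ modulo $q$ could contribute a single real simple zero inside our would-be zero-free region. I would dispatch this possibility by showing that, under~\eqref{eq:minmax}, no non-principal real primitive character modulo $q$ exists at all, so the exceptional character simply cannot occur. Indeed, by the CRT a primitive character modulo $q=\prod p^{v_p(q)}$ decomposes as a product of primitive characters modulo each $p^{v_p(q)}$; a real primitive character modulo $q$ therefore requires a real primitive character modulo each $p^{v_p(q)}$. For odd $p$ the group $(\ZZ/p^{v}\ZZ)^\ast$ is cyclic, so its only nontrivial real character is the Legendre symbol, which has conductor $p$ rather than $p^v$ as soon as $v\ge 2$; for $p=2$, every real character modulo $2^v$ has conductor in $\{1,4,8\}$, so no real primitive character modulo $2^v$ exists when $v\ge 4$. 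Condition~\eqref{eq:minmax} forces $v_p(q)\ge 0.7\gamma_0$ at every $p\mid q$, so taking $\gamma_0$ sufficiently large (say $\gamma_0\ge 6$) guarantees $v_p(q)\ge 2$ at odd primes and $v_p(q)\ge 4$ at $p=2$. The hypothetical exceptional zero of Lemma~\ref{lem:technicallemma} is therefore vacuous, and the zero-free region holds for every primitive character $\chi$ modulo $q$, as claimed.
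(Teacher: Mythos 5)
Your plan of feeding $L$-function upper bounds into Lemma~\ref{lem:technicallemma} is the right strategy, but the execution fails at a crucial point: in that lemma $\vartheta$ is \emph{not} a free parameter. The lemma \emph{defines} $\vartheta=\eta/(400\log K)$, so once $\eta$ and $K$ are fixed, so is $\vartheta$. With the choices coming out of Theorem~\ref{thm:bound-on-LFunction} one has $K\asymp\eta^{-1}$, hence $\log K\asymp\log\log q$ (for $|t|\le T$) and $\log K\asymp\log(\ell/\log q)$ (for $|t|>T$). Therefore the zero-free width you actually obtain is $\vartheta\asymp\eta_1/\log\log q\asymp 1/\big((\log q)^{2/3}(\log\log q)^{4/3}\big)$ and, in the second range, $\vartheta\asymp\eta_2/\log(\ell/\log q)$; both are smaller than the claimed $\vartheta_1,\vartheta_2$ by an unbounded ($q$- or $t$-dependent) factor. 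Your proposed remedy, ``absorbing the factor $\log K_j\asymp\log\log q$ into a smaller value of $A$,'' is not legitimate: $A$ must be a \emph{constant} depending only on $\fq$, while $\log\log q\to\infty$. A related symptom is your verification of the technical hypothesis of Lemma~\ref{lem:technicallemma}: you estimated the right-hand side $1/(15\vartheta)$ using the \emph{target} value $\vartheta_1$ instead of the forced value $\eta/(400\log K)$, and in fact, with $K\asymp\eta^{-1}$ the dominant comparison reduces to roughly $48\log(1/\eta)\le(80/3)\log(1/\eta)$, which is false.

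The paper sidesteps this by \emph{not} using Theorem~\ref{thm:bound-on-LFunction} as a black box. It explicitly notes that ``the parameters $\eta_1$ and $\eta_2$ are chosen here with the goal of producing the largest possible zero-free region rather than minimizing an upper bound on $L(s,\chi)$,'' and returns to Lemma~\ref{lem:etaXY-LFunction} with a \emph{wider} strip. Concretely, in Case~1 it takes $\eta_1=(2\xi_0)^{1/3}(\log\ell)^{2/3}/(\log q)^{2/3}$, which is larger than the $\eta_1$ of Theorem~\ref{thm:bound-on-LFunction} by a factor $\asymp\log\log q$. The price is a much weaker bound on $L$ (now $K\asymp\ell^3$ rather than $K\asymp\eta^{-1}$), but $\log K$ remains $\asymp\log\log q$, so the lemma's output $\vartheta=\eta/(400\log K)$ lands precisely on the stated $\vartheta_1=A/\big((\log q)^{2/3}(\log\log q)^{1/3}\big)$; a similar trade-off in two further sub-ranges of $|t|$ produces $\vartheta_2$. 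To repair your argument you would need to widen $\eta$ (accepting a worse $K$) so that $\eta/\log K$ attains the claimed size, essentially re-deriving the paper's case analysis.

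One part of your proposal is correct and, if anything, cleaner than what the paper does: your CRT argument that no nonprincipal primitive real character modulo $q$ exists under~\eqref{eq:minmax} once $\gamma_0$ is large. Indeed, for odd $p$ the unique quadratic character on the cyclic group $(\ZZ/p^v\ZZ)^\ast$ factors through $(\ZZ/p\ZZ)^\ast$ (its kernel contains the $p$-Sylow subgroup of odd order), so its conductor is $p$; and for $p=2$ every real character modulo $2^v$ has conductor dividing $8$. Since~\eqref{eq:minmax} forces $v_p(q)\ge 0.7\gamma_0$, a choice $\gamma_0\ge 6$ guarantees no exponent is that small, so the exceptional character of Lemma~\ref{lem:technicallemma} genuinely cannot occur. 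The paper instead observes that there are only $O(1)$ primitive real characters whose conductor has core $\fq$ and shrinks $A$ to avoid their finitely many zeros; your version dispenses with this. But this does not rescue the $\log\log q$ loss described above, which is the genuine gap.
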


\begin{proof}
The proof is similar to that of Theorem~\ref{thm:bound-on-LFunction},
but the parameters $\eta_1$ and $\eta_2$ are chosen here with the goal of producing the
largest possible zero-free region rather than minimizing an upper bound on $L(s,\chi)$.

In what follows, we can assume that
$$
Q_0=\fq^{4\ell/\log q},
$$
for otherwise $Q_0=\fq^{\gamma_0}$ and the result is already contained in
\cite[Theorem~3.2]{BaSh}. Let $Y$ be defined by~\eqref{eq:def Y} and as
in the proof of Theorem~\ref{thm:bound-on-LFunction} let $M$ and $\ell_0$
be defined by~\eqref{eq:M def} and~\eqref{eq:l0 def}, respectively.  Put
$$
\eta_1=\frac{(2\xi_0)^{1/3}(\log\ell)^{2/3}}{(\log q)^{2/3}}\mand
\eta_2=\frac{c_0\log q}{\log Y}=\frac{c_0\log q}{60(\ell\log 2\ell)^{3/4}}.
$$
One verifies that
\begin{equation}
\label{eq:sleepovery}
\eta_1=\frac{\xi_0(\log M)^2}{(\log q)^2}-\frac{2\log\ell}{\log M}.
\end{equation}
Let $\ell_1$ be determined via the relation
$$
\frac{(2\xi_0)^{1/3}(\log\ell_1)^{2/3}}{(\log q)^{2/3}}
\cdot 60(\ell_1\log 2\ell_1)^{3/4}=c_0\log q.
$$
Then we have
\begin{equation}
\label{eq:jiffy-lube}
Y^{\eta_1}\le q^{c_0}\quad\Longleftrightarrow\quad\ell\le\ell_1
\end{equation}
and the asymptotic relation
\begin{equation}
\label{eq:stunning-props2}
\ell_1\sim c_2(\log q)^{20/9}(\log\log q)^{-17/9}
\end{equation}
for some absolute constant $c_2>0$.  Clearly, $\ell_0<\ell_1$ for all large $q$.

To prove the result, we study three cases separately.

\bigskip{\underline{\sc Case 1}}: $\ell\le\ell_0$.
We put $X=M$ and $\eta=\eta_1$.  Using~\eqref{eq:sleepovery}
and the fact that $M\ge Q_0$ in this case,
we easily verify the conditions~\eqref{eq:Q_0Xeta-conditions} 
if $q$ is sufficiently large. By Lemma~\ref{lem:etaXY-LFunction}
we have
$$
L(s,\chi)\ll \eta^{-1}X^{\eta}+Y^{\eta}\ell q^{-c_0}\qquad(\sigma>1-\eta).
$$
By our choices of $X$ and $\eta$ it follows that $\eta^{-1}X^{\eta}\ll\ell^3$.
Moreover, $Y^\eta \ell q^{-c_0}\le\ell$ by~\eqref{eq:jiffy-lube}
since the asymptotic relation~\eqref{eq:stunning-props2} implies that
$\ell\le\ell_1$ for all large $q$.
Consequently,
$$
\log|L(s,\chi)|\le 3\log\ell+O(1)\ll\eta\log X\qquad(\sigma>1-\eta).
$$
Applying Lemma~\ref{lem:technicallemma}, taking into account that
$\log\ell\asymp\log\log q$ in view of~\eqref{eq:stunning-props},
we see that there are numbers $A_1,B>0$ with the following property.  Put
$$
\vartheta_1=\frac{A_1}{(\log q)^{2/3}(\log\log q)^{1/3}}\mand
T=\exp\(B(\log q)^{5/3}(\log\log q)^{1/3}\).
$$
Then there exists at most one primitive character $\chi$ modulo $q$
such that $L(s,\chi)$ has a zero in the region
$$
\cR_1=\{s\in\CC:\sigma>1-\vartheta_1,~|t|\le T\}.
$$
If such a character exists, then it is a real character, and
the zero is unique, real and simple.
To rule out the possibility of such an exceptional zero, we note that there
are at most $O(1)$ primitive real Dirichlet characters for which the core of the conductor is
the number $\fq$. Consequently, after replacing
$A_1$ with a smaller number depending only on $\fq$
(more precisely, on the locations of real zeros of these characters) we can
guarantee that $L(s,\chi)$ does not vanish in $\cR_1$ for any primitive
character modulo $q$.

\bigskip{\underline{\sc Case 2}}: $\ell_0<\ell\le\ell_1$. In this case we put
$X=Q_0^{c_3}$, where $c_3$ is a large positive constant, and we take
$\eta=\eta_1$ as before.  Since $\ell>\ell_0$ we have by~\eqref{eq:stunning-props}:
$$
\frac{\xi_0(\log X)^2}{(\log q)^2}-\frac{2\log\ell}{\log X}
>\frac{16\xi_0 c_3^2\ell_0^2(\log\fq)^2}{(\log q)^4}
-\frac{\log\ell_0\cdot\log q}{2c_3\ell_0\log\fq}
\sim\frac{c_4(\log\log q)^{2/3}}{(\log q)^{2/3}},
$$
where
$$
c_4=16\xi_0 c_1^2c_3^2(\log\fq)^2
-\frac{1}{2c_1c_3\log\fq}.
$$ 
Thus, if $c_3$ is large enough so that $c_4>(2\xi_0)^{1/3}$, then
the conditions~\eqref{eq:Q_0Xeta-conditions} are met for all large $q$.
By Lemma~\ref{lem:etaXY-LFunction} we again have
$$
L(s,\chi)\ll \eta^{-1}X^{\eta}+Y^{\eta}\ell q^{-c_0}\qquad(\sigma>1-\eta).
$$
Using our choices of $X$ and $\eta$, and taking into account that
$Y^\eta \ell q^{-c_0}\le\ell$ as in Case~1, it follows that
$$
\log|L(s,\chi)|\ll \frac{\ell(\log\ell)^{2/3}}{(\log q)^{5/3}}\qquad(\sigma>1-\eta),
$$
where the implied constant depends only on $\fq$.
Applying Lemma~\ref{lem:technicallemma}, we see that there are numbers
$A_2,B_2>0$ with the following property.  Put
$$
\vartheta_2=\frac{A_2\log q}{\ell}\mand
T_2=\exp\(B_2(\log q)^{20/9}(\log\log q)^{-17/9}\).
$$
Then for any primitive character $\chi$ modulo $q$, the $L$-function
$L(s,\chi)$ does not vanish in the region
$$
\cR_2=\{s\in\CC:\sigma>1-\vartheta_2,~T<|t|\le T_2\}.
$$

\bigskip{\underline{\sc Case 3}}: $\ell>\ell_1$. In this case we put
$X=Q_0^{c_3}$ as before, but now we set $\eta=\eta_2$.  As in Case~2 we have
\begin{equation}
\label{eq:crown}
\frac{\xi_0(\log X)^2}{(\log q)^2}-\frac{2\log\ell}{\log X}
\ge (c_4+o(1))\frac{(\log\log q)^{2/3}}{(\log q)^{2/3}}.
\end{equation}
In view of the asymptotic relation in~\eqref{eq:stunning-props2}, 
the left side of~\eqref{eq:crown} exceeds $\eta$ for all large $q$
provided that $c_4>c_0/(60c_2^{3/4})$, which we can guarantee by
choosing $c_3$ sufficiently large at the outset.
With these choices the conditions~\eqref{eq:Q_0Xeta-conditions} are met
for all large $q$. By Lemma~\ref{lem:etaXY-LFunction} we again have
$$
L(s,\chi)\ll \eta^{-1}X^{\eta}+Y^{\eta}\ell q^{-c_0}\qquad(\sigma>1-\eta).
$$
Using our choices of $X$ and $\eta$, and taking into account that
$Y^\eta\ell q^{-c_0}=\ell$ holds in view of the definition of
$\eta_2$, it follows that
$$
\log|L(s,\chi)|\ll
\frac{\ell^{1/4}}{(\log\ell)^{3/4}}\qquad(\sigma>1-\eta),
$$
where the implied constant depends only on $\fq$.
Applying Lemma~\ref{lem:technicallemma}, we see that
for any primitive character $\chi$ modulo $q$, the $L$-function
$L(s,\chi)$ does not vanish in the region
$$
\cR_3=\{s\in\CC:\sigma>1-\vartheta_2,~|t|>T_2\}.
$$

Combining the results of the three cases, and taking $A = \min\{A_1,A_2\}$,
we complete the proof.
\end{proof}

\begin{remark} The result of
Iwaniec~\cite[Theorem~2]{Iwan} is superior to 
Theorem~\ref{thm:nonvanish-LFunction} when $|t|\gg (\log q)^4(\log\log q)^3$.
Reducing $A>0$  in Theorem~\ref{thm:nonvanish-LFunction} so that
$$
\vartheta_3=\frac{A}{(\ell\log\ell)^{3/4}}
\le\frac{1}{40000(\log\fq+(\ell\log 2\ell)^{3/4})},
$$
and adjusting the constant $B$
if necessary, these results can be neatly combined:
For any character $\chi$ modulo $q$, the $L$-function
$L(s,\chi)$ does not vanish in the region
$\{s\in\CC:\sigma>1-\max\{\min\{\vartheta_1,\vartheta_2\},\vartheta_3\}\}$.
\end{remark}

\section{Non-vanishing and bounds on $L$-functions,
their logarithmic derivatives and reciprocals}
\label{sec:bounds-on-L-functions-II}

\begin{lemma}
\label{lem:Brady1}
Suppose $f(z)$ is analytic in a region that contains a disc $|z|\le\Delta$ with
$\Delta>0$, and that $f(0)\ne 0$.
Let $r$ and $R$ be real numbers such that $0<r<R<\Delta$.  Then 
$$
\left|\frac{f'}{f}(z)-\sum_{j=1}^J\frac{1}{z-z_j}\right|
\le b(\Delta,r,R)\log\frac{B}{|f(0)|}\qquad(|z|\le r),
$$
where the sum runs over all zeros $z_j$ of $f$ for which $|z_j|\le R$, and
$$
b(\Delta,r,R)=\frac{2R}{(R-r)^2}+\frac{1}{(R-r)\log(\Delta/R)}.
$$
\end{lemma}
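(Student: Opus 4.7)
The plan is to prove Lemma~\ref{lem:Brady1} by the standard Montgomery--Vaughan--Titchmarsh device, namely, by removing the zeros of $f$ inside $|z|\le R$ with a Blaschke-type product and then applying the Borel--Carath\'eodory inequality, while separately controlling the count of zeros via Jensen's formula. Here the ambient constant $B$ is understood as an upper bound for $|f|$ on $|z|=\Delta$ (or on $|z|\le\Delta$).

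First I would bound the zero count. Let $n(R)$ denote the number of zeros $z_j$ of $f$ (counted with multiplicity) in $|z|\le R$. Jensen's formula on the disc $|z|\le\Delta$ gives
$$
\sum_{|z_j|\le\Delta}\log\frac{\Delta}{|z_j|}
=\int_0^{2\pi}\log|f(\Delta e^{i\theta})|\,\frac{d\theta}{2\pi}-\log|f(0)|
\le\log\frac{B}{|f(0)|},
$$
and since each zero in $|z_j|\le R$ contributes at least $\log(\Delta/R)$ to the left side,
$$
n(R)\log(\Delta/R)\le\log\bigl(B/|f(0)|\bigr).
$$

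Next I would introduce the auxiliary function
$$
g(z)=f(z)\prod_{|z_j|\le R}\frac{R^2-\overline{z_j}z}{R(z-z_j)}.
$$
Each Blaschke-type factor is holomorphic and has modulus $1$ on $|z|=R$, so $g$ is analytic and zero-free on $|z|\le R$ with $|g(z)|=|f(z)|$ on $|z|=R$; in particular $|g(0)|\ge|f(0)|$ since $|z_j|\le R$. Taking logarithmic derivatives,
$$
\frac{g'}{g}(z)=\frac{f'}{f}(z)-\sum_{j=1}^{J}\frac{1}{z-z_j}-\sum_{j=1}^{J}\frac{\overline{z_j}}{R^2-\overline{z_j}z},
$$
so the quantity we wish to estimate equals $g'/g(z)$ plus the rightmost sum. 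The triangle-inequality bound for the latter is easy: for $|z|\le r$ and $|z_j|\le R$,
$$
\left|\frac{\overline{z_j}}{R^2-\overline{z_j}z}\right|\le\frac{R}{R^2-Rr}=\frac{1}{R-r},
$$
and summing over $j$ and using the Jensen bound on $n(R)$ yields
$$
\left|\sum_{j=1}^{J}\frac{\overline{z_j}}{R^2-\overline{z_j}z}\right|
\le\frac{n(R)}{R-r}
\le\frac{1}{(R-r)\log(\Delta/R)}\log\frac{B}{|f(0)|}.
$$

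The main step is to bound $|g'/g|$ on $|z|\le r$. Since $g$ is zero-free in $|z|\le R$, the function $h(z)=\log g(z)$ (any branch) is analytic there with $\operatorname{Re} h(z)=\log|g(z)|\le\log B$ on $|z|=R$, and $\operatorname{Re} h(0)=\log|g(0)|\ge\log|f(0)|$. The derivative form of the Borel--Carath\'eodory inequality, applied to $h-h(0)$, then gives
$$
|h'(z)|\le\frac{2R}{(R-r)^2}\bigl(\log B-\log|g(0)|\bigr)
\le\frac{2R}{(R-r)^2}\log\frac{B}{|f(0)|}\qquad(|z|\le r).
$$
Adding the two estimates yields precisely the asserted bound with $b(\Delta,r,R)=\frac{2R}{(R-r)^2}+\frac{1}{(R-r)\log(\Delta/R)}$.

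The only delicate point is verifying the derivative form of Borel--Carath\'eodory that is invoked (standard but usually stated for $h(0)=0$); this is handled by applying the classical inequality to $h-h(0)$ on the disc $|z|\le R$ and differentiating, after which all estimates are routine. No deeper obstacle is anticipated, since the Blaschke adjustment, Jensen's formula, and Borel--Carath\'eodory are precisely the tools designed for this type of factorization-and-bound argument.
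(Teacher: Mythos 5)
Your proof is correct and uses exactly the ingredients the paper points to: Jensen's inequality to bound the zero count, a Blaschke-type factor to clear the zeros, and the derivative form of Borel--Carath\'eodory, which is the content of Montgomery--Vaughan Lemma~6.3 that the paper cites for the case $\Delta=1$. The only departure is that you prove the general $\Delta$ directly, whereas the paper reduces to $\Delta=1$ by applying the cited lemma to $F(w)=f(\Delta w)$ and rescaling $r,R$ by $\Delta$; both routes give the same constant $b(\Delta,r,R)$, and yours has the minor advantage of being self-contained.
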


\begin{proof}
This is a result of Montgomery and Vaughan~\cite[Lemma~6.3]{MontVau}
in the special case $\Delta=1$, hence we only sketch the proof;
the explicit form of the upper bound given here can be
obtained by keeping track of the constants that arise while
combining the Jensen inequality (see~\cite[Lemma~6.1]{MontVau}) with the
Borel-Carath\'eodory lemma (see~\cite[Lemma~6.2]{MontVau}).
The general case $\Delta>0$ is proved by applying 
the specific case $\Delta=1$ to the function given by
$F(w)=f(\Delta w)$ for all $w\in\CC$. Writing $z = \Delta w$
with $|w| \le 1$ for $|z|\le\Delta$ we have 
$$
\frac{f'}{f}(z) = \Delta^{-1} \frac{F'}{F}(w).
$$
Replacing $r\mapsto r\Delta$ and $R\mapsto R\Delta$, the general follows
from case $\Delta=1$ of the lemma applied to the function $F$; the details
are omitted.
\end{proof}

Let $q$ be a modulus satisfying~\eqref{eq:minmax},
$\chi$ a primitive character modulo~$q$, and put
$\tau=|t|+3$ and $\ell=\log(q\tau)$ as usual.
For the remainder of this section, all constants (including constants
implied by the symbols $\ll$, $\gg$, etc.) may depend on the core $\fq$
of $q$ but are otherwise absolute.

The next result combines our work in Sections~\ref{sec:bounds-on-L-functions-I}
and~\ref{sec:the-zero-free-region} with the main results of~\cite{Iwan}.  To
formulate the theorem, we introduce some notation. Let $A,B_1,B_2>0$ be fixed
real numbers, put
\begin{alignat*}{3}
&\eta_1=\frac{A}{(\log q)^{2/3}(\log\log q)^{1/3}},\qquad
&&\eta_2=\frac{A\log q}{\ell},\qquad
&&\eta_3=\frac{A}{\ell^{1/2}(\log\ell)^{3/4}},\\
&\vartheta_1=\tfrac12\eta_1,\qquad
&&\vartheta_2=\tfrac12\eta_2,\qquad
&&\vartheta_3= \tfrac12\ell^{-1/4}\eta_3,
\end{alignat*}
and denote
\begin{equation}
\begin{split}
\label{eq:T1T2-defn}
T_1&=\exp\(B_1(\log q)^{5/3}(\log\log q)^{1/3}\),\\
T_2&=\exp\(B_2(\log q)^4(\log\log q)^3\).
\end{split}
\end{equation}  
Define $\eta$ and $\vartheta$ by 
\begin{equation}
\label{eq: eta theta}
( \eta, \vartheta)=\begin{cases}
(\eta_1, \vartheta_1) &\quad\hbox{if $|t|\le T_1$},\\
(\eta_2, \vartheta_2) &\quad\hbox{if $T_1<|t|\le T_2$},\\
(\eta_3, \vartheta_3) &\quad\hbox{if $|t|>T_2$}. 
\end{cases}
\end{equation}
Finally, put
$$
K=\begin{cases}
(\log q)^{2/3}(\log\log q)^{1/3}&\quad\hbox{if $|t|\le T_1$},\\
\ell/\log q&\quad\hbox{if $T_1<|t|\le T_2$},\\
\exp(100\ell^{1/4})&\quad\hbox{if $|t|>T_2$}.
\end{cases}
$$
Then, combining Theorems~\ref{thm:bound-on-LFunction} and~\ref{thm:nonvanish-LFunction}
along with~\cite[Theorems~1 and~2]{Iwan}, we see that one can select the
constants $A,B_1,B_2>0$ so that the following holds.
 
\begin{theorem}
\label{eq:shrubbery!}
For any $s=\sigma+it$ with $\sigma\ge 1-\eta$ and  any primitive character $\chi$ modulo $q$ we have
$L(s,\chi)\ll K$, and  $L(s,\chi)$
does not vanish in the region
$\{s\in\CC:\sigma>1-\vartheta\}$.
\end{theorem}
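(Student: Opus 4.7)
The plan is to prove Theorem~\ref{eq:shrubbery!} by a three-way case analysis on the size of $|t|$, in each case directly quoting one of the estimates established earlier. The three parameter regimes in the statement are designed to match exactly the ranges in which the bounds on $|L(s,\chi)|$ and the corresponding zero-free regions have already been proved: the interval $|t|\le T_1$ is covered by the first halves of Theorems~\ref{thm:bound-on-LFunction} and~\ref{thm:nonvanish-LFunction}; the interval $T_1<|t|\le T_2$ is covered by their second halves; and the tail $|t|>T_2$ lies in the range where Iwaniec's~\cite[Theorems~1 and~2]{Iwan} give sharper estimates than those two theorems can supply.

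For $|t|\le T_1$, Theorem~\ref{thm:bound-on-LFunction} directly yields $L(s,\chi)\ll\eta_1^{-1}$ on $\sigma>1-\eta_1$, which matches the stated $K\asymp\eta_1^{-1}=A^{-1}(\log q)^{2/3}(\log\log q)^{1/3}$, while Theorem~\ref{thm:nonvanish-LFunction} produces non-vanishing in a strip of the same order $\eta_1$, so that after halving $A$ if needed we may indeed take $\vartheta_1=\eta_1/2$. For $T_1<|t|\le T_2$, the same two theorems give the analogous bounds with $\eta_2,\vartheta_2$ and $K\asymp\ell/\log q$. For $|t|>T_2$, \cite[Theorem~1]{Iwan} supplies $|L(s,\chi)|\ll\exp(100\ell^{1/4})$ on $\sigma\ge 1-\eta_3$, and \cite[Theorem~2]{Iwan} the zero-free region of width $\vartheta_3$; the additional factor $\ell^{-1/4}$ in $\vartheta_3$ relative to $\eta_3$ reflects the loss incurred when a Lemma~\ref{lem:technicallemma}-type mechanism is applied with a majorant as large as $\exp(100\ell^{1/4})$, exactly as in Iwaniec's argument.

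The only delicate point---and it is really bookkeeping rather than mathematics---is reconciling the constants. Each of the four source theorems supplies its own ``sufficiently small $A$'' or ``sufficiently large $B$'' depending only on $\fq$. The plan is to take the $A$ in Theorem~\ref{eq:shrubbery!} to be the minimum of the four permissible values (further reduced by a factor of two so that the relations $\vartheta_j=\eta_j/2$ for $j=1,2$ remain admissible), and to take $B_1,B_2$ to be the corresponding cutoff exponents, so that the partition $(0,T_1]\cup(T_1,T_2]\cup(T_2,\infty)$ aligns with the hypotheses of the four source theorems and the formula~\eqref{eq: eta theta} is consistent at the endpoints. Apart from verifying this consistency, no new estimate is required: the content of the theorem is entirely packaged in Theorems~\ref{thm:bound-on-LFunction} and~\ref{thm:nonvanish-LFunction} together with the two cited results of Iwaniec.
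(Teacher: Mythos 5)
Your proposal is correct and is precisely the route the paper takes: the paper itself states Theorem~\ref{eq:shrubbery!} as a direct consequence of ``combining Theorems~\ref{thm:bound-on-LFunction} and~\ref{thm:nonvanish-LFunction} along with~\cite[Theorems~1 and~2]{Iwan},'' without writing out the case split or the constant-matching, and your three-way decomposition at $T_1$ and $T_2$ plus the final reduction of $A$ and alignment of $B_1,B_2$ is exactly the elaboration those two sentences require.
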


For technical reasons (see Remark~\ref{rem:blahblahblah} below) we now define
\begin{equation}
\label{eq:loudmouth-T3-defn}
T_3=\exp\(B_2(\log q)^4(\log\log q)^{-1}\).
\end{equation}
Note that $T_3<T_2$, hence all of the preceding results hold if $T_2$ is
replaced by $T_3$ since the results of Iwaniec~\cite{Iwan} hold generally
for all $t\in\RR$. Although this modification would lead to a slightly weaker
form of Theorem~\ref{eq:shrubbery!}, it yields a slightly stronger (and more
convenient) form of Theorem~\ref{eq:curious-beasts} below.

We apply Lemma~\ref{lem:Brady1} with the choices
$$
f(z)=L(z+1+2\eta+it,\chi),\qquad
\Delta=3\eta,\qquad
r=2\eta+\vartheta\mand R=2.9\eta.
$$
Observe that zeros of $f$ are of the form $\rho - (1+2\eta+it)$ 
where $\rho$ runs through the zeros of  $L(s,\chi)$.

By Theorem~\ref{eq:shrubbery!} we can take $B=CK$ with a
suitable constant $C$, and using the Euler product we have
\begin{align*}
|f(0)|
=\prod_p\left|1-\chi(p)p^{-1-2\eta-it}\right|^{-1}
\ge\prod_p\left|1+p^{-1-2\eta}\right|^{-1}
=\frac{\zeta(2+4\eta)}{\zeta(1+2\eta)}\gg \eta;
\end{align*}
consequently,
$$
\log\frac{B}{|f(0)|}\ll\begin{cases}
\log\log q&\quad\hbox{if $|t|\le T_3$},\\
\ell^{1/4}&\quad\hbox{if $|t|>T_3$}.
\end{cases}
$$
Since $R-r=0.9\eta-\vartheta\asymp\eta$ and $\log(\Delta/R)\asymp 1$,
it follows that $b(\Delta,r,R)\asymp\eta^{-1}$.
Then Lemma~\ref{lem:Brady1} shows that for any $s=\sigma+it$
with $\sigma\in[1-\vartheta,1+4\eta+\vartheta]$ we have
\begin{equation}
\label{eq:L'L}
\frac{L'}{L}(s,\chi)=\sum_{\rho\in\cZ(t)}\frac{1}{s-\rho}+O(\Theta),
\end{equation}
where the sum is over the (possibly empty) set $\cZ(t)$ comprised of all
zeros $\rho$ of $L(s,\chi)$ for which $|\rho-(1+2\eta+it)|\le R=2.9\eta$, and
\begin{equation}
\label{eq:Theta}
\Theta=\begin{cases}
(\log q)^{2/3}(\log\log q)^{4/3}&\quad\hbox{if $|t|\le T_1$},\\
(\ell\log\log q)/\log q&\quad\hbox{if $T_1<|t|\le T_3$},\\
(\ell\log\ell)^{3/4}&\quad\hbox{if $|t|>T_3$}.
\end{cases}
\end{equation}
Note that the implied constant in \eqref{eq:L'L} is absolute.

\begin{remark}
\label{rem:blahblahblah}
Our motivation for introducing the parameter $T_3$ defined by~\eqref{eq:loudmouth-T3-defn}
is to account for the fact that $\Theta$, regarded as a function of $t$,
has an order of magnitude transition at $|t|\asymp T_3$ (and not at $|t|\asymp T_2$).
\end{remark}

\begin{theorem}
\label{eq:curious-beasts}
For any $s=\sigma+it$ with $\sigma>1-\frac12\Theta^{-1}$ for $L(s,\chi)$ with a primitive character $\chi$ of conductor $q$  we have the following bounds 
on the logarithmic derivative
\begin{equation}
\label{eq:Brady4a}
\frac{L'}{L}(s,\chi)\ll \Theta,
\end{equation}
its size
\begin{equation}
\label{eq:Brady4b}
|\log L(s,\chi)|\le\tfrac34\log \Theta+O(1),
\end{equation}
and  its reciprocal
\begin{equation}
\label{eq:Brady4c}
\frac{1}{L(s,\chi)}\ll \Theta.
\end{equation}
\end{theorem}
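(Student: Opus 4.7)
The plan is to derive all three bounds from the partial-fraction expansion~\eqref{eq:L'L}, the zero-free region of Theorem~\ref{eq:shrubbery!}, and the Dirichlet-series bound $|L'/L(s_0,\chi)| \le -\zeta'/\zeta(\sigma_0) \ll \eta^{-1} \ll \Theta$ available at the safe point $s_0 = 1 + 2\eta + it$. By choosing the constant $A$ large enough in the definitions of $\eta$ and $\vartheta$, the hypothesis $\sigma > 1 - \tfrac12\Theta^{-1}$ places $s$ both inside the zero-free region $\sigma > 1 - \vartheta$ and inside the strip $[1-\vartheta,\,1+4\eta+\vartheta]$ on which~\eqref{eq:L'L} is valid; this is most delicate in the third regime $|t|>T_2$, where $\vartheta_3 \asymp \Theta^{-1}$.

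To prove~\eqref{eq:Brady4a}, I would apply~\eqref{eq:L'L} first at the safe point $s_0$. Taking real parts, the positivity $\Re(s_0-\rho)^{-1}\ge 0$ for every nontrivial zero (since $\Re\rho \le 1 < 1+2\eta$), together with the lower bound $\Re(s_0-\rho)^{-1} \gg \eta^{-1}$ for $\rho\in\cZ(t)$ (valid because $|s_0-\rho|\le 2.9\eta$), yields the zero-count bound $|\cZ(t)| \ll \eta\Theta$. At $s$ itself, the zero-free region forces $|s-\rho|\ge \sigma-\Re\rho > \vartheta - \tfrac12\Theta^{-1} \gg \vartheta$, so
$$
\biggl|\sum_{\rho\in\cZ(t)}(s-\rho)^{-1}\biggr| \ll |\cZ(t)|\,\vartheta^{-1} \ll \eta\Theta/\vartheta.
$$
A short case analysis across the three $t$-regimes shows $\eta\Theta/\vartheta \ll \Theta$, which combined with the $O(\Theta)$ error in~\eqref{eq:L'L} gives~\eqref{eq:Brady4a}.

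For~\eqref{eq:Brady4b}, I would integrate $L'/L$ along the horizontal segment from $s$ to $2+it$, writing $\log L(s,\chi) = \log L(2+it,\chi) - \int_\sigma^2 (L'/L)(u+it,\chi)\,du$ with $|\log L(2+it,\chi)| = O(1)$ from the Euler product. Splitting the integral at $u_0 = 1 + \Theta^{-1}$, I bound the integrand by $O(\Theta)$ on $[\sigma, u_0]$ via~\eqref{eq:Brady4a} (contributing $O(1)$ since this interval has length $\le \tfrac{3}{2}\Theta^{-1}$) and by $|L'/L(u+it,\chi)|\ll 1/(u-1)$ on $[u_0,2]$ from the Dirichlet series (contributing $\log\Theta + O(1)$). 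Naively this yields $|\log L(s,\chi)| \le \log\Theta + O(1)$; to sharpen the leading constant from $1$ to $\tfrac34$, I would refine the path bound using a Hadamard three-lines interpolation between the $O(1)$ bound on $\Re s = 2$ and the bound $|L(s,\chi)|\ll K$ on $\Re s = 1-\eta$ supplied by Theorem~\ref{eq:shrubbery!}, exploiting the explicit relations among $K$, $\eta$, and $\Theta$ in each of the three regimes.

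Finally,~\eqref{eq:Brady4c} is immediate from~\eqref{eq:Brady4b}: since $-\log|L(s,\chi)| \le |\log L(s,\chi)| \le \tfrac34\log\Theta + O(1)$, we obtain $|L(s,\chi)|^{-1} \le e^{O(1)}\Theta^{3/4} \ll \Theta$. I expect the main obstacle to be extracting the precise constant $\tfrac34$ in~\eqref{eq:Brady4b}: the crude integration above produces only $\log\Theta + O(1)$, and achieving $\tfrac34\log\Theta + O(1)$ requires careful bookkeeping of how $K$, $\eta$, $\vartheta$, and $\Theta$ interact across the three $t$-regimes demarcated by $T_1$ and $T_3$.
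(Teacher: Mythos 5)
Your plan for \eqref{eq:Brady4a} departs from the paper's and has a genuine gap in the third regime $|t|>T_2$. You propose to bound $\bigl|\sum_{\rho\in\cZ(t)}(s-\rho)^{-1}\bigr|$ by $|\cZ(t)|\cdot\vartheta^{-1}$ after establishing $|\cZ(t)|\ll\eta\Theta$ via positivity at a safe point, and you then claim $\eta\Theta/\vartheta\ll\Theta$ by case analysis. This is fine when $|t|\le T_2$, since there $\vartheta=\tfrac12\eta$ and $\eta/\vartheta=2$. But when $|t|>T_2$ one has $\vartheta_3=\tfrac12\ell^{-1/4}\eta_3$, so $\eta/\vartheta=2\ell^{1/4}$ is unbounded, and your bound becomes $\ll\ell^{1/4}\Theta$, not $\ll\Theta$. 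The zero-count $|\cZ(t)|\ll\eta\Theta\asymp\ell^{1/4}$ in that regime is simply too large for the crude distance estimate $|s-\rho|\gg\vartheta$ to recover \eqref{eq:Brady4a}.

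The paper avoids this by following Montgomery--Vaughan's Theorem~11.4 more faithfully: it takes the reference point $s_*=1+\Theta^{-1}+it$ (not $1+2\eta+it$), establishes $\sum_{\rho}\Re(s_*-\rho)^{-1}\ll\Theta$ there, and then compares the two sums term by term. The key steps are showing that $\beta\in[1-0.9\eta,\,1-\vartheta]$ forces $|s-\rho|\asymp|s_*-\rho|$ for every $\rho\in\cZ(t)$ (this is where the constant $A$ must be taken large so that $\vartheta>\tfrac12\Theta^{-1}$ with room to spare), after which
$$
\frac{1}{s-\rho}-\frac{1}{s_*-\rho}
=\frac{1+\Theta^{-1}-\sigma}{(s-\rho)(s_*-\rho)}
\ll\frac{\Theta^{-1}}{|s_*-\rho|^2}\le\Re\frac{1}{s_*-\rho},
$$
so the difference sum is again $\ll\Theta$ and there is no loss of $\ell^{1/4}$. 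Your scheme never uses this term-by-term comparison, and that is what your argument is missing.

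For \eqref{eq:Brady4b} your plan also diverges: the paper does not invoke a Hadamard three-lines interpolation; it simply reuses the argument of Montgomery--Vaughan Theorem~11.4, namely integrating $L'/L$ from $s$ to $s_*$ using \eqref{eq:Brady4a} (the segment has length $O(\Theta^{-1})$, contributing $O(1)$) together with the bound at $s_*$ coming from the Euler product. You correctly observe that the straightforward version of this gives coefficient $1$ rather than $\tfrac34$, and your proposed fix is speculative; note, though, that \eqref{eq:Brady4c}, which is all that is used downstream, only needs coefficient $1$, so the $\tfrac34$ is not load-bearing. Your derivation of \eqref{eq:Brady4c} from \eqref{eq:Brady4b} is fine.
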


\begin{proof}
We follow the proof of~\cite[Theorem~11.4]{MontVau} closely,
making only minor modifications. Since
$$
\left|\frac{L'}{L}(s,\chi)\right|
\le -\frac{\zeta'}{\zeta}(\sigma)\ll\frac{1}{\sigma-1},
$$
the bound~\eqref{eq:Brady4a} is immediate when $\sigma\ge 1+\Theta^{-1}$.
Continuing the argument in that proof, here we set $s_*=1+\Theta^{-1}+it$
and in the same manner derive the upper bound (cf.~\cite[Equation~(11.11)]{MontVau})
\begin{equation}
\label{eq:coldplay1}
\sum_{\rho\in\cZ(t)}\frac{1}{s_*-\rho}
\ll \Theta
\end{equation}
and the estimate (cf.~\cite[Equation~(11.12)]{MontVau})
\begin{equation}
\label{eq:coldplay2}
\sum_{\rho\in\cZ(t)}\frac{1}{s-\rho}
=\sum_{\rho\in\cZ(t)}\(\frac{1}{s-\rho}-\frac{1}{s_*-\rho}\)+O(\Theta),
\end{equation}
where again $\cZ(t)$ is the set of zeros $\rho$ of $L(s,\chi)$
for which $|\rho-(1+2\eta+it)|\le 2.9\eta$.

Now suppose that $1-\tfrac12\Theta^{-1}\le\sigma\le 1+\Theta^{-1}$.
If $\rho=\beta+i\gamma\in\cZ(t)$, then
by the definition of $\cZ(t)$ we have
$$
|\beta - (1+2\eta)| = |\Re \rho-(1+2\eta+it)|\le 2.9\eta;
$$
hence $\beta \ge 1-0.9\eta$. In view of Theorem~\ref{eq:shrubbery!} 
we see that $\beta\in[1-0.9\eta,1-\vartheta]$.
Consequently,
$$
1\ge\frac{\Re(s-\rho)}{\Re(s_*-\rho)}
\ge\frac{1-\tfrac12\Theta^{-1}-\beta}{1+\Theta^{-1}-\beta}
\ge \frac{0.9\eta-\tfrac12\Theta^{-1}}{0.9\eta+\Theta^{-1}}\gg 1.
$$
Since $\Im(s-\rho)=\Im(s_*-\rho)$, it follows that
$|s-\rho|\asymp|s_*-\rho|$ for every $\rho\in\cZ(t)$.
Consequently, for all zeros $\rho\in\cZ(t)$
we have for $\sigma\ge 1-\tfrac12\Theta^{-1}$:
$$
\frac{1}{s-\rho}-\frac{1}{s_*-\rho}
=\frac{s_*-s}{(s-\rho)(s_*-\rho)}
=\frac{1+\Theta^{-1}-\sigma}{(s-\rho)(s_*-\rho)}
\ll\frac{\Theta^{-1}}{|s_*-\rho|^2}
\le\Re\frac{1}{s_*-\rho}.
$$
Summing this over $\rho\in\cZ(t)$, taking into 
account~\eqref{eq:coldplay1} and~\eqref{eq:coldplay2}, we deduce the bound
$$
\sum_{\rho\in\cZ(t)}\frac{1}{s-\rho}\ll \Theta.
$$
In view of~\eqref{eq:L'L} this completes the proof of~\eqref{eq:Brady4a},
which in turn yields the bounds~\eqref{eq:Brady4b} and~\eqref{eq:Brady4c}
via the same argument given in the proof of~\cite[Theorem~11.4]{MontVau}, making
use of the bound
$$
\log L(s,\chi)-\log L(s_*,\chi)=\int_{s_*}^s\frac{L'}{L}(u,\chi)\,du
\ll |s_*-s|\Theta\ll 1
$$
in the case that  $1-\tfrac12\Theta^{-1}\le\sigma\le 1+\Theta^{-1}$.
\end{proof}

\section{Proof of Theorem~\ref{thm:mu-Lambda-sums}}
\label{sec:proof main}

We continue to use the notation of Section~\ref{sec:bounds-on-L-functions-II}.
In particular, all constants (including constants
implied by the symbols $\ll$, $\gg$, etc.) may depend on $\fq$
but are otherwise absolute.

To bound the sums $M(x,\chi)$ and $\psi(x,\chi)$
defined by~\eqref{eq:M chi x defn} and~\eqref{eq:psi tau chi x defn},
we use Perron's formula in conjunction with
the bounds provided by Theorem~\ref{eq:curious-beasts}.  The techniques we use are
standard and well known; we follow an approach outlined in the
book~\cite{MontVau} of Montgomery and Vaughan.

As in the proof of~\cite[Theorem~11.16]{MontVau}, we start from the estimates
\begin{align}
\label{eq:yak1}
\psi(x,\chi)&=\frac{-1}{2\pi i}\int_{\sigma_0-iT}^{\sigma_0+iT}
\frac{L'}{L}(s,\chi)\frac{x^s}{s}\,ds+R_\Lambda,\\
\label{eq:yak2}
M(x,\chi)&=\frac{1}{2\pi i}\int_{\sigma_0-iT}^{\sigma_0+iT}
\frac{1}{L(s,\chi)}\frac{x^s}{s}\,ds+R_\mu,
\end{align}
where $\sigma_0=1+(\log x)^{-1}$, $T$ is a parameter in $[2,x]$ to
be specified below, and for $f=\Lambda$ or $\mu$ the error term $R_f$
satisfies the bound
$$
R_f\ll\begin{cases}
x(\log x)^2T^{-1}&\quad\hbox{if $f=\Lambda$},\\
x(\log x)T^{-1}&\quad\hbox{if $f=\mu$}.
\end{cases}
$$
The integrals in~\eqref{eq:yak1} and~\eqref{eq:yak2} can be handled
with the same argument using~\eqref{eq:Brady4a} and~\eqref{eq:Brady4c},
respectively.  For this reason, here we consider only $f=\Lambda$.

Before proceeding, recall that the parameters $\vartheta$ and $\Theta$ are
functions of the real variable $t$ (up to now, the dependence
on $t$ has been suppressed in the notation for the sake of simplicity), and thus
we write them as  $\vartheta(t)$ and $\Theta(t)$.
In particular, recalling that by~\eqref{eq:T1T2-defn}
and~\eqref{eq:loudmouth-T3-defn} we have
$$
T_1=\exp\(B_1(\log q)^{5/3}(\log\log q)^{1/3}\)\quad\text{and}\quad
T_3=\exp\(B_2(\log q)^4(\log\log q)^{-1}\),
$$
we see from~\eqref{eq:Theta} that
$$
\Theta(t)\asymp\begin{cases}
(\log q)^{2/3}(\log\log q)^{4/3}&\quad\hbox{if $2\le |t|\le T_1$},\\
(\log |t|\cdot\log\log q)/\log q&\quad\hbox{if $T_1<|t|\le T_3$},\\
(\log |t|\cdot\log\log |t|)^{3/4}&\quad\hbox{if $|t|>T_3$}.
\end{cases}
$$
Here, we have used the fact that $\ell=\log(q\tau)\asymp\log t$ whenever $|t|>T_1$.
Similarly, we see from~\eqref{eq: eta theta} that
$$
\vartheta(t)\asymp\begin{cases}
(\log q)^{-2/3}(\log\log q)^{-1/3}
&\quad\hbox{if $2\le |t|\le T_1$},\\
(\log |t|)^{-1}\log q
&\quad\hbox{if $T_1<|t|\le T_2$},\\
(\log |t|\cdot\log\log |t|)^{-3/4}
&\quad\hbox{if $|t|>T_2$}.
\end{cases}
$$
Consequently, there is a constant
$c\in(0,\frac12)$ depending only on $\fq$ for which
\begin{equation}
\label{eq:wired-for-sound}
c\,\Theta(T)^{-1}<\vartheta(t)\qquad(|t|\le T).
\end{equation}
Finally, notice that
\begin{equation}
\label{eq:wolf-blitzer}
\log x\gg(\log q)^{2/3}(\log\log q)^{4/3}\quad\Longrightarrow\quad
\Theta(T)\ll\log x
\end{equation}
and we see below that the latter bound is needed in order to derive nontrivial
bounds on the sums we are considering.

Following the proof of~\cite[Theorem~6.9]{MontVau} we denote
by $\cC$ a closed contour that consists of line segments connecting the points
$\sigma_0-iT$, $\sigma_0+iT$, $\sigma_1+iT$ and $\sigma_1-iT$, where
$\sigma_1=1-\eps\,\Theta(T)^{-1}$.  From~\eqref{eq:wired-for-sound}
and Theorem~\ref{eq:shrubbery!} it follows that $L(s,\chi)$ does not vanish
inside the contour $\cC$; therefore,
$$
0=\oint\limits_{\cC}\frac{L'}{L}(s,\chi)\frac{x^s}{s}\,ds
=\(\int_{\sigma_0-iT}^{\sigma_0+iT}
+\int_{\sigma_0+iT}^{\sigma_1+iT}
+\int_{\sigma_1+iT}^{\sigma_1-iT}
+\int_{\sigma_1-iT}^{\sigma_0-iT}\)\frac{L'}{L}(s,\chi)\frac{x^s}{s}\,ds.
$$
By~\eqref{eq:yak1} and the bound on $R_\Lambda$ we have
$$
\int_{\sigma_0-iT}^{\sigma_0+iT}
\frac{L'}{L}(s,\chi)\frac{x^s}{s}\,ds
=-2\pi i\psi(x,\chi)+O(x(\log x)^2T^{-1})
$$
Next, using~\eqref{eq:Brady4a} and the bound $\Theta(T)\ll\log x$ we have
$$
\int_{\sigma_1\pm iT}^{\sigma_0\pm iT}
\frac{L'}{L}(s,\chi)\frac{x^s}{s}\,ds
\ll\frac{\Theta(T)}{T}\int_{\sigma_1}^{\sigma_0}x^\sigma\,d\sigma 
\le\frac{\Theta(T)}{T}\frac{x^{\sigma_0}}{\log x}
\ll\frac{x}{T}.
$$
Finally, using~\eqref{eq:Brady4a} and $\Theta(T)\ll\log x$ again we see that
$$
\int_{\sigma_1-iT}^{\sigma_1+iT}\frac{L'}{L}(s,\chi)\frac{x^s}{s}\,ds
\ll x^{\sigma_1}\int_{-T}^T\frac{\Theta(t)}{|t|+1}\,dt
\ll x^{\sigma_1}\Theta(T)\log T
\ll x^{\sigma_1}(\log x)^2.
$$
Putting everything together, and recalling the definition of $\sigma_1$,
we deduce that
\begin{equation}
\label{eq:psi-cold-brew}
\psi(x,\chi)
\ll x(\log x)^2\(\frac{1}{T}+\exp\(-\frac{c\log x}{\Theta(T)}\)\).
\end{equation}
By a similar argument we have
$$
M(x,\chi)
\ll x\log x\(\frac{1}{T}+\exp\(-\frac{c\log x}{\Theta(T)}\)\).
$$
Note that these bounds are trivial unless $\Theta(T)\ll\log x$, which
is the reason we assume~\eqref{eq:obama} (cf.~\eqref{eq:wolf-blitzer}).

It now remains to optimise $T$.  
Recall that the definitions of $T_1$ and $T_3$ in ~\eqref{eq:T1T2-defn} and~\eqref{eq:loudmouth-T3-defn} again.
The constants $B_1,B_2>0$ depend only on $\fq$, and it is clear from our methods
that these numbers can be chosen (and fixed) with $B_1\le B_2$. 

To optimize the bounds in~\eqref{eq:psi-cold-brew}
our aim is to select $T$ so that $\Theta(T)\log T\asymp\log x$; this requires
only a drop of care.

\bigskip{\underline{\sc Case 1}}: $2\le T\le T_1$. In this range
$\Theta(T)\asymp(\log q)^{2/3}(\log\log q)^{4/3}$, so we put
$$
T=\exp\(\frac{B_1\log x}{(\log q)^{2/3}(\log\log q)^{4/3}}\).
$$
With this choice, the condition $2\le T\le T_1$ is verified if
\begin{equation}
\label{eq:BadSanta}
B_1^{-1}(\log q)^{2/3}(\log\log q)^{4/3}\le\log x\le (\log q)^{7/3}(\log\log q)^{5/3}.
\end{equation}
As the results of Theorem~\ref{thm:mu-Lambda-sums}
are trivial when $\log x < B_1^{-1}(\log q)^{2/3}(\log\log q)^{4/3}$,
we can dispense with the first inequality in~\eqref{eq:BadSanta}, which then simplifies
as the condition $x \le Q_1$.

\bigskip{\underline{\sc Case 2}}: $T_1<T\le T_3$. In this range
$\Theta(T)\asymp(\log T\cdot\log\log q)/\log q$, and to optimize we put
$$
T=\exp\(\frac{B_1(\log x\cdot\log q)^{1/2}}{(\log\log q)^{1/2}}\).
$$
With this choice, the condition $T_1<T\le T_3$ is verified if
$$
(\log q)^{7/3}(\log\log q)^{5/3}<\log x
\le (B_2/B_1)^2(\log q)^7(\log\log q)^{-1}.
$$
Since $B_1\le B_2$ this includes the range $ Q_1 < x \le Q_2$,
so we are done in this case.

\bigskip{\underline{\sc Case 3}}: $T>T_3$. In this range
$\Theta(T)\asymp (\log T\cdot\log\log T)^{3/4}$ so we set
$$
T=\exp\(\frac{16B_2(\log x)^{4/7}}{(\log\log x)^{3/7}}\). 
$$
With this choice, the condition $T>T_3$ is verified if
\begin{equation}
\label{eq:trump-sux}
\frac{(\log x)^{4/7}}{(\log\log x)^{3/7}}
>\frac{(\log q)^4}{16\log\log q}.
\end{equation}
However, since $q\ge 3$ and $x>Q_2$ (that is, for $\log x>(\log q)^7(\log\log q)^{-1}$) 
it follows that
$$
\frac{\log x}{(\log\log x)^{3/4}}
>\frac{(\log q)^7(\log\log q)^{-1}}{(7\log\log q-\log\log\log q)^{3/4}}
>\frac{(\log q)^7}{128(\log\log q)^{7/4}},
$$
which implies~\eqref{eq:trump-sux} and therefore finishes the proof.

\section*{Acknowledgements}

W.~D.~Banks was supported in part by a grant from the University of
Missouri Research Board.
I.~E.~Shparlinski was supported in part by Australian Research Council
  Grant~DP170100786

%
%
%
%
%
%


\begin{thebibliography}{99}


\bibitem{BaSh} W.~D.~Banks  and I.~E.~Shparlinski,
``Bounds on short character sums and $L$-functions 
 for characters with a smooth modulus.''
\emph{J.\ d'Analyse Math.},  to appear (available
from \url{http://arxiv.org/abs/1605.07553}).

\bibitem{Bour} J.~Bourgain, 
``M{\"o}bius--Walsh correlation bounds and an estimate of Mauduit and Rivat.''
\emph{J.\ d'Analyse Math.} 119 (2013), 147--163. 

\bibitem{Chowla} S.~Chowla,
``The Riemann hypothesis and Hilbert's tenth problem.''
\emph{Mathematics and its applications}, 4, Gordon and Breach,
New York, London, Paris, 1965.

\bibitem{Gal} P.~X.~Gallagher,
``Primes in progressions to prime-power modulus.''
\emph{Invent.\ Math.} 16 (1972), 191--201.


\bibitem{Green} B.~Green,
``On (not) computing the M\"obius function using bounded depth circuits.''
\emph{Combin.\ Prob.\ Comp.} 21 (2012), 942--951. 

  


\bibitem{HK}
G.~Harman and I.~K\'atai,
``Primes with preassigned digits II.''
\emph{Acta Arith.} 133 (2008), 171--184.

\bibitem{Hinz}
J.~Hinz,
``Eine Erweiterung des nullstellenfreien Bereiches der Heckeschen Zetafunktion
und Primideale in Idealklassen.''
\emph{Acta Arith.} 38 (1980/81),  209--254. 

\bibitem{Iwan} H.~Iwaniec,
``On zeros of Dirichlet's $L$ series.''
\emph{Invent.\ Math.} 23 (1974), 97--104.

\bibitem{IwKow} H.~Iwaniec and E.~Kowalski,
\emph{Analytic number theory.} Amer.\  Math.\  Soc.,
Providence, RI, 2004.

\bibitem{LMN}
N.~Linial, Y.~Mansour and N.~Nisan, 
``Constant depth circuits, Fourier transform, and learnability.''
\emph{J.\ Assoc.\ Comput.\ Mach.} 40 (1993), 607--620.

\bibitem{MontVau}
H.~L.~Montgomery and R.~C.~Vaughan,
\emph{Multiplicative number theory~I. Classical theory}.
Cambridge Studies in Advanced Mathematics, 97.
Cambridge University Press, Cambridge, 2007.

\bibitem{Sarn}
P.~Sarnak, 
``M{\"o}bius  randomness and dynamics.''
\emph{Not.\ South Afr.\ Math.\ Soc.} 43 (2012), 89--97.



\bibitem{Sound}
K.~Soundararajan,
``Partial sums of the M\"{o}bius function.''
\emph{J.\ Reine Angew. Math.} 631 (2009), 141--152.

\bibitem{Wal}
A.~Walfisz,
\emph{Weylsche Exponentialsummen in der neueren Zahlentheorie},
Leipzig: B.G.~Teubner, 1963.

\end{thebibliography}
\end{document}